\newcommand{\inlineitem}[1][]{%
\ifnum\enit@type=\tw@
    {\descriptionlabel{#1}}
  \hspace{\labelsep}%
\else
  \ifnum\enit@type=\z@
       \refstepcounter{\@listctr}\fi
    \quad\@itemlabel\hspace{\labelsep}%
\fi} \makeatother
\newcommand{\ga}{\alpha}
\newcommand{\gb}{\beta}
\newcommand{\gga}{\gamma}
\newcommand{\gd}{\delta}
\newcommand{\gep}{\epsilon}
\newcommand{\gl}{\lambda}
\newcommand{\gm}{\mu}
\newcommand{\gp}{\pi}
\newcommand{\subs}{\subset}
\newcommand{\sups}{\supset}
\newcommand{\bs}{\backslash}
\newcommand{\ti}{\tilde}
\newcommand{\mbb}{\mathbb}
\newcommand{\mcl}{\mathcal}
\newcommand{\us}{\underset}
\newcommand{\os}{\overset}
\newcommand{\Lra}{\Leftrightarrow}
\newcommand{\lra}{\longrightarrow}
\newcommand{\llra}{\longleftrightarrow}
\newcommand{\Z}{\mbb Z}
\newcommand{\La}{\Leftarrow}
\newcommand{\ra}{\rightarrow}
\newcommand{\Ra}{\Rightarrow}
\newcommand{\equ}[1]{%
\begin{equation*}
#1
\end{equation*}
}
\newcommand{\equa}[1]{%
\begin{equation*}
\begin{aligned}
#1
\end{aligned}
\end{equation*}
}
\newcommand{\equan}[2]{%
\begin{equation}
\label{Eq:#1}
\begin{aligned}
#2
\end{aligned}
\end{equation}
}
\newcommand{\mattwo}[4]{%
\begin{pmatrix}
  #1 & #2\\ #3 & #4
\end{pmatrix}
}
\newcommand{\matcolfive}[5]{%
\begin{pmatrix}
  #1\\#2\\#3\\#4\\#5
\end{pmatrix}
}
\newcommand{\mattwofour}[8]{%
\begin{pmatrix}
  #1 & #2 & #3 & #4\\#5 & #6 & #7 & #8
\end{pmatrix}
}
\newcommand{\mattwosix}[9]{%
  \def\argi{{#1}}%
  \def\argii{{#2}}%
  \def\argiii{{#3}}%
  \def\argiv{{#4}}%
  \def\argv{{#5}}%
  \def\argvi{{#6}}%
  \def\argvii{{#7}}%
  \def\argviii{{#8}}%
  \def\argix{{#9}}%
  \mattwosixRelay
}
\newcommand\mattwosixRelay[3]{%
  % arguments 1-9 are now in
  %   \I-\IX
  % arguments 10-12 are in
  %   #1 - #3
  % <macro body>%
\begin{pmatrix}
  \argi & \argii & \argiii & \argiv & \argv & \argvi\\
  \argvii & \argviii & \argix & #1 & #2 & #3
\end{pmatrix}
}
\newtheorem{thm}{Theorem}[section]
\newtheorem{theorem}[thm]{Theorem}
\newtheorem{lemma}[thm]{Lemma}
\newtheorem{cor}[thm]{Corollary}
\theoremstyle{definition}
\newtheorem{defn}[thm]{Definition}
\newtheorem{ques}[thm]{Question}
\newtheorem{claim}[thm]{Claim}
\newtheorem{example}[thm]{Example}
\newtheorem{note}[thm]{Note}
\numberwithin{equation}{section}
\begin{document}

\title[On the Geometry of Multiplicatively Closed Sets with
Arbitrarily Large Gaps]{On the Geometry of Multiplicatively Closed Sets Generated by at most Two 
Elements with Arbitrarily Large Gaps, a Constructive Method}
\author[C.P. Anil Kumar]{C.P. Anil Kumar}
\address{Stat Math Unit, Indian Statistical Institute,
  8th Mile Mysore Road, Bengaluru-560059, India}
\email{akcp1728@gmail.com}
\subjclass[2010]{11B05,11B25,11N25,11N69} 
\keywords{Multiplicatively Closed Sets, Log-Rationality, Approximate Inverses, Gaps, 
Weyl-Equidistributive Criterion, Doubly 
Multiplicatively Closed Lines}

\thanks{The author would like to thank Indian Statistical Institute (ISI) Bengaluru, India 
for the support. The work was done while the author was a visiting scientist at ISI}
\maketitle
\begin{abstract}
We prove in Theorem~\ref{theorem:GapsDoublyGenerated} that the multiplicatively closed subset
generated by at most two elements in the set of natural numbers $\mbb{N}$ has arbitrarily large 
gaps by explicitly constructing large integer intervals with known prime factorization for
the end points, which do not contain any element from the multiplicatively closed set 
apart from the end points, which belong to the multiplicatively closed set. 
An Example~\ref{example:Main} is also illustrated.  

We also give a criterion in 
Theorems~\ref{theorem:DoubleGen},~\ref{theorem:LineClassification} by using a geometric 
correspondence between maximal singly generated multiplicatively closed sets and points of the 
space $\mbb{PF}^{\infty}_{\mbb{Q}\geq 0}$ (refer to Theorem~\ref{theorem:Bijection}) as to when 
a finitely generated multiplicatively closed set gives rise to a doubly multiplicatively closed 
line (refer to Definition~\ref{defn:DoublyMultClosedLine}). We answer a similar 
Question~\ref{ques:Gaps} partially about gaps in a multiply-generated multiplicative closed set, 
when it is contained in a doubly multiplicative closed set using Theorem~\ref{theorem:DoubleGen}
and Theorem~\ref{theorem:MultiplyGeneratedDoublyClosedLine}. 

In the appendix Section~\ref{sec:Appendix} we discuss another constructive proof 
(refer to Theorem~\ref{theorem:AnotherConstructiveProof}) for arbitrarily large gap
intervals, where the prime factorization is not known for the right end-point unlike the 
constructive proof of the main result of the article in the case of multiplicatively closed set
$\{p_1^ip_2^j\mid i,j \in \mbb{N}\cup\{0\}\}$ with $p_1<p_2,Log_{p_1}(p_2)$ irrational 
for which the prime factorization is known for both the end-points of the gap interval via the 
stabilization sequence of the irrational $\frac{1}{Log_{p_1}(p_2)}$.
\end{abstract}
\bigskip
\section{\bf{Introduction}}
Historically we have seen more of existence proofs 
of arbitrary large gaps in certain subsets of integers that are present in the literature.
A short survey below mentions such results. 
However constructive proofs in particular those which give the formulae for the end points of the 
arbitrary large gap intervals have not been there. Here in this article we will be interested in 
one such constructive proof.
\subsection{Short Survey}
\label{sec:History}
The distribution of integers with exactly $k-$ distinct prime factors has been 
studied by many authors. It was first shown by Landau~\cite{LE} that for a fixed
$k \geq 1$, the function defined by
\equ{\gp(x,k)=\us{n \leq x}{\sum} f_k(n),}
where $f_k(n)=1$ if $n$ has exactly $k$-prime factors and $0$ otherwise satisfies
\equan{LargeGap}
{\gp(x,k) = \bigg(\frac{x}{log\ x}\bigg)\frac{(log\ log\ x)^{k-1}}{(k-1)!}(1+o(1)).}
Among the other authors who have obtained similar or better asymptotic expressions
are Sathe~\cite{LGSIII,LGSIIIIV}, Selberg~\cite{SA},Hensely~\cite{HD},Hildebrand and 
Tenenbaum~\cite{HATG}.

Let $\{p_1,p_2,\ldots,p_k\}$ be any set of $k-$distinct primes. Let 
$\mbb{S}_{\{p_1,p_2,\ldots,p_k\}}$ be the multiplicatively closed set generated by $1$
and numbers, which have exactly and all the factors from $\{p_1,p_2,\ldots,p_k\}$. Let $\mcl{C}$ 
be the collection of all $k-$subsets of prime numbers. Consider the set
\equ{\mbb{S}_k=\us{c \in \mcl{C}}{\bigcup}\mbb{S}_{c}.}
Using any of the results say the result by Landau~\cite{LE} about asymptotics of $\gp(x,k)$ we 
conclude that there are arbitrarily large gaps
in $\mbb{S}$. We observe here that using Equation~\ref{Eq:LargeGap} we have
\equ{\us{x \lra \infty}{lim}\frac{\gp(x,k)}{x}=0.}
If the gaps were bounded then we have that
\equ{\us{x \lra \infty}{liminf}\frac{\gp(x,k)}{x}>0}
would be a non-zero constant. Hence the gaps must be arbitrarily large in the set $\mbb{S}_k$.

With an additional bit of effort on the result of Landau~\cite{LE} we can extend and 
conclude arbitrary large gaps for the set
\equ{\us{i=1}{\os{k}{\bigcup}}\mbb{S}_i.}

Now choose a base say $b=2$. If we use asymptotics for a multiplicatively closed set $\mbb{T}$ 
generated by primes $\{p_1,p_2,\ldots,p_k\}$ then we get for large $x$ the following inequality
\equ{\lceil\frac{log_b\ x}{\us{i=1}
{\os{k}{\sum}}log_b\ p_i}\rceil \leq  \#(\mbb{T}\cap [1,x]) \leq \us{i=1}{\os{k}{\prod}}
\lceil\frac{log_b\ x}{log_b\ p_i}\rceil.}
Hence again we have
\equ{\us{x \lra \infty}{lim}\frac{\#(\mbb{T}\cap [1,x])}{x}=0}
from which we will be able to conclude that there are arbitrarily large gaps in $\mbb{T}$.

However here in this article we give a constructive proof for multiplicatively closed 
sets, which are contained in doubly generated multipicatively closed sets with known
generators. First we consider
multiplicatively closed sets generated by two primes or more generally two positive integers ($>1$), which are 
not Log-Rational to each other. We note here that the multiplicatively closed set can contain 
numbers with single prime factor unlike the set, which is considered in the result by 
Landau~\cite{LE}. Using the technique of rational approximation and stabilization of the 
sequences of approximate inverses and increasing gaps between two such successive ones we 
explicitly construct by locating large intervals of natural numbers, which do not contain any 
element in the given multiplicatively closed set there by proving the main result
Theorem~\ref{theorem:GapsDoublyGenerated} given below.
\section{\bf{The main result and method of Proof}}
Let $\mbb{N}$ denote the set of natural numbers. 
Let $\mbb{P}=\{2,3,5,\ldots,\}$ denote the set of primes.  
Here we give using techniques from number theory, geometry 
and finite fields, a constructive proof of the main result~\ref{theorem:GapsDoublyGenerated}, 
where the prime factorization of the end-points of the 
gap intervals are known and also the end-points belong to the multiplicatively closed set itself.

Before we state the main result we need a definition.
\begin{defn}[Stabilization sequence of an irrational using sequences of approximate inverses]
Let $0 < \ga < 1$ be an irrational. Let $\frac{p_n}{q_n},gcd(p_n,q_n)=1,p_n<q_n$ be any sequence of
positive rationals converging to $\ga$. Now consider the arithmetic progressions
$p_n\Z^{+}$ and $q_n\Z^{+}$. Consider the sequence \equ{\big((p_n\Z^{+} \cup
q_n\Z^{+} \cup \{0\})\cap \{0,1,2,\ldots,q_np_n\}\big)\subs\{0,1,2,\ldots,q_np_n\}}
given as follows.
\equa{&l_0(n)=0,p_n,2p_n,3p_n,\ldots,l_1(n)p_n,q_n,\\
&(l_1(n)+1)p_n,(l_1(n)+2)p_n,\ldots,l_2(n)p_n,2q_n,\\
&(l_2(n)+1)p_n,\ldots,l_i(n)p_n,iq_n,\\
&(l_i(n)+1)p_n,\ldots,(q_n-1)p_n,p_nq_n.} 
For every $n\in \mbb{N}$, define the sequence of numbers
\equ{\{l_{j_1(n)}(n),l_{j_2(n)}(n),\ldots,l_{j_{r_n}(n)}(n)\}}
given as follows. We define $j_1(n)=0,l_0(n)=0$. Now let 
$j(n)\in \{j_1(n),j_2(n),\ldots, j_{r_n}(n)\}$.

The defining/characterizing property for $l_{j(n)}(n)$ is given by
\equ{\boxed{q_n \geq j(n)\geq 1, 
(l_{j(n)}(n)+1)p_n-j(n)q_n < min\{(l_i(n)+1)p_n-iq_n\mid 0\leq i<j(n)\}.}}
We have 
$\{l_{j_1(n)}(n),l_{j_2(n)}(n),\ldots,l_{j_{r_n}(n)}(n)\}$=\begin{center}
$
\begin{cases}
=\{0=l_0(n)=l_{j_1(n)}(n)=p_n^{-1}-1\mod\ q_n\} \text{ if } p_n=1\\
=\{0=l_{j_1(n)}(n)=l_0(n) < l_{j_2(n)}(n) = l_1(n) < l_{j_3(n)}(n)< \ldots \\ <
(l_{j_{r_n}(n)}(n)=p_n^{-1}-1\mod\ q_n)\}  \text{ if } p_n \neq 1.
\end{cases}
$
\end{center}
The sequence \equ{\{1=l_0+1=l_{j_1(n)}(n)+1,l_{j_2(n)}(n)+1,\ldots,l_{j_{r_n}(n)}(n)+1\}} 
is the sequence of approximate inverses of $p_n\mod q_n$.
By using Theorems~\ref{theorem:IncreasingGaps},\ref{theorem:StabInvariance},\ref{theorem:Gaps}
we conclude that the gaps 
\begin{itemize}
\item $l_{j_{i+1}(n)}(n)-l_{j_i(n)}(n)$ in the above sequence is
increasing. 
\item The values $j_i(n)$ stabilize and also $l_{j_i(n)}(n)$
is eventually a constant as $n \lra \infty$ for a stabilized $j_i$. 
(Let the stabilized constant be denoted by $l_{j_i}$)
\item We have $\us{i \lra \infty}{lim}(l_{j_{i+1}}-l_{j_{i}})\nearrow\infty$.
\end{itemize}
This stabilized approximate inverse sequence $\{l_{j_i}+1:i\in \mbb{N}\}$ is called the
stabilization sequence of the irrational $\ga$. 
\end{defn}
Now we state the main result.
\begin{theorem}
\label{theorem:GapsDoublyGenerated}
Let $\mbb{PP}=\{p_1<p_2\}$ be set a set of two integers, which are not log-rational to each other. 
Let $\ga=\frac{1}{Log_{p_1}(p_2)}$ be the associated irrational less than one. 
Let $\{s_i:i\in \mbb{N}\}$ be the stabilization sequence of 
$\ga$. Let $t_i=\lfloor s_i\ga\rfloor$. Then 
\begin{enumerate}
\item The integer interval 
\equ{(p_2^{t_{i+1}-1},\ldots,p_1^{s_i}p_2^{t_{i+1}-t_i-1})}
contains no element in the multiplicatively closed set generated by $\mbb{PP}$
apart from the end-points. 
\item The limit of the above gaps (length of the above interval) tends to infinity
better than a geometric progression with common ratio $p_2$.
\end{enumerate}
\end{theorem}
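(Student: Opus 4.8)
The plan is to pass to a logarithmic scale base $p_2$ and reduce the non-representability statement to a one-sided Diophantine approximation property of the stabilization sequence. Writing $\ga=\log_{p_2}(p_1)\in(0,1)$, every element of the multiplicatively closed set $M=\{p_1^ap_2^b:a,b\in\mbb{N}\cup\{0\}\}$ has the form $p_1^ap_2^b=p_2^{a\ga+b}$, so its location is governed by the real number $a\ga+b$. First I would record the two endpoints in these coordinates: since $t_i=\lfloor s_i\ga\rfloor$, the left endpoint $L=p_2^{t_{i+1}-1}$ satisfies $\log_{p_2}L=t_{i+1}-1$, while the right endpoint $R=p_1^{s_i}p_2^{t_{i+1}-t_i-1}$ satisfies $\log_{p_2}R=s_i\ga+t_{i+1}-t_i-1=(t_{i+1}-1)+\{s_i\ga\}$, writing $\{x\}=x-\lfloor x\rfloor$. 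In particular $R>L$, both endpoints lie in $M$ (for $i$ large enough that $t_{i+1}-t_i\geq1$), and the whole log-interval $(\log_{p_2}L,\log_{p_2}R)$ sits inside the single unit cell $[t_{i+1}-1,t_{i+1})$ because $0<\{s_i\ga\}<1$.

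The core reduction is then that an element $p_1^ap_2^b\in M$ lies strictly between $L$ and $R$ if and only if $a\ga+b$ has integer part exactly $t_{i+1}-1$ and fractional part $\{a\ga\}\in(0,\{s_i\ga\})$; the integer-part condition forces $b=t_{i+1}-1-\lfloor a\ga\rfloor$, so the requirement $b\geq0$ becomes $\lfloor a\ga\rfloor\leq t_{i+1}-1$. I would therefore analyze the fractional parts $\{a\ga\}$ for $a\geq1$. The decisive input is the defining property of the stabilization sequence, namely that the $s_i$ are precisely the indices at which $\{a\ga\}$ attains a new one-sided minimum; this is the content of the approximate-inverse construction together with Theorems~\ref{theorem:IncreasingGaps},~\ref{theorem:StabInvariance},~\ref{theorem:Gaps}. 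Concretely, $\{s_i\ga\}=\min_{1\leq a<s_{i+1}}\{a\ga\}$, attained only at $a=s_i$, so that $\{a\ga\}>\{s_i\ga\}$ for every $a$ with $1\leq a<s_{i+1}$ and $a\neq s_i$.

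With this in hand, part (1) is a short case check. For $a=0$ the value $a\ga+b$ is an integer and yields only the endpoint $L$; for $1\leq a<s_{i+1}$ with $a\neq s_i$ the inequality $\{a\ga\}>\{s_i\ga\}$ puts $a\ga+b$ outside $(\log_{p_2}L,\log_{p_2}R)$ for every choice of $b$; the index $a=s_i$ (with $b=t_{i+1}-1-t_i$) produces exactly the endpoint $R$; and for $a\geq s_{i+1}$ one has $\lfloor a\ga\rfloor\geq\lfloor s_{i+1}\ga\rfloor=t_{i+1}$, whence $a\ga+b\geq s_{i+1}\ga>t_{i+1}>\log_{p_2}R$ for all $b\geq0$. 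Thus no element of $M$ falls strictly inside, proving (1). For part (2) I would compute the length directly: using $\{s_i\ga\}=s_i\ga-t_i$ and $p_2^{s_i\ga}=p_1^{s_i}$,
\[
R-L=p_2^{\,t_{i+1}-1}\big(p_2^{\{s_i\ga\}}-1\big)=p_2^{\,t_{i+1}-t_i-1}\big(p_1^{s_i}-p_2^{t_i}\big).
\]
Since $p_1^{s_i}$ and $p_2^{t_i}$ are distinct positive integers with $p_1^{s_i}>p_2^{t_i}$ (equality would contradict the irrationality of $\log_{p_1}(p_2)$), their difference is at least $1$, giving the clean lower bound $R-L\geq p_2^{\,t_{i+1}-t_i-1}$. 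The increasing-gaps part of the stabilization theorem makes $s_{i+1}-s_i$ monotonically increasing to infinity, hence the exponents $t_{i+1}-t_i$ form an increasing unbounded sequence; feeding in the (at least geometric) growth of the stabilization terms supplied by those theorems shows this exponent outgrows any linear function of $i$, so the gap lengths eventually dominate every geometric progression of ratio $p_2$, which is (2).

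I expect the main obstacle to be the bridge used in the second paragraph: translating the combinatorial definition of the approximate inverses of $p_n\bmod q_n$ (the boxed minimality condition) and its stabilization as $n\lra\infty$ into the clean Diophantine statement that each $s_i$ is a one-sided best approximation, i.e. $\{s_i\ga\}=\min_{1\leq a<s_{i+1}}\{a\ga\}$. Everything else is bookkeeping on the logarithmic scale, but this equivalence, and the matching of the boundary indices $s_i,\,s_{i+1}$ with the floors $t_i,\,t_{i+1}$, is exactly where the earlier theorems must be invoked with care; likewise the precise rate in (2) is not forced by monotone unboundedness alone and must be extracted from the quantitative form of the increasing-gaps result.
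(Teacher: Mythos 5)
Your proposal is correct and follows essentially the same route as the paper: the paper's Lemma~\ref{lemma:EmptyLemma} is exactly your logarithmic reduction (with $z_{k_j}=\{s_i\ga\}$ playing the role of your one-sided record minimum of the fractional parts $\{a\ga\}$), the case analysis on the exponent of $p_1$ matches, and the gap bound $R-L\geq p_2^{\,t_{i+1}-t_i-1}$ combined with Lemma~\ref{lemma:IncreasingLemma} is precisely how the paper concludes. The only caveat is one you already flag yourself: for part (2) the assertion that the exponent $t_{i+1}-t_i$ outgrows every linear function of $i$ does not follow from monotone unboundedness alone, though the paper's own proof of (2) is no more quantitative than yours.
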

This theorem is illustrated with the Example~\ref{example:Main}.
As an application of Theorem~\ref{theorem:GapsDoublyGenerated} we have the following corollary.
\begin{cor}
\label{cor:GapsDoublyGenerated}
Let $\mbb{A}$ be a finite set of positive natural numbers. Let $\mbb{PP} \neq \{1\}$ be a nonempty set of at most two natural numbers. Let $\mbb{S}=\{1<a_1<a_2<\ldots<a_n<\ldots\} \subs
\mbb{N}$ be the infinite multiplicatively closed set generated by $\mbb{A}$.
Suppose the multiplicatively closed set $\mbb{S}\subs \langle \mbb{PP} \rangle$ 
multiplicatively closed set generated by the set $\mbb{PP}$. Then we have 
\begin{itemize}
\item $\us{n \lra \infty}{limsup} (a_{n+1}-a_{n}) = \infty.$
\item We have explicit expressions for the end points of certain arbitrarily large gap intervals
in the set $\mbb{S}$ using the generators of $\mbb{PP}$.
\end{itemize}
\end{cor}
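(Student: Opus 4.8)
The plan is to split the argument according to the structure of $\mbb{PP}$ and to transfer statements about $\mbb{S}$ to corresponding statements about the ambient set $\langle \mbb{PP}\rangle$, for which Theorem~\ref{theorem:GapsDoublyGenerated} (and, in the degenerate cases, an elementary estimate) already does the work. Throughout I would exploit only that $\mbb{S}\sbq\langle \mbb{PP}\rangle$ and that $\mbb{S}$ is infinite with $1\in\mbb{S}$. For the first bullet I would argue by density exactly as in Section~\ref{sec:History}. Since $\mbb{PP}$ has at most two elements, every element of $\langle \mbb{PP}\rangle$ below $x$ has the form $q_1^iq_2^j$ with $i,j=O(\log x)$, so
\equ{\#\big(\langle \mbb{PP}\rangle\cap[1,x]\big)=O\big((\log x)^2\big)\quad\text{whence}\quad \us{x\lra\infty}{lim}\frac{\#(\langle \mbb{PP}\rangle\cap[1,x])}{x}=0.}
As $\mbb{S}\sbq\langle \mbb{PP}\rangle$ this density-zero bound is inherited by $\mbb{S}$; were the gaps bounded by a constant $C$ we would get $a_n\leq a_1+(n-1)C$ and hence positive lower density, a contradiction. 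Therefore $\us{n\lra\infty}{limsup}(a_{n+1}-a_n)=\infty$.

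For the constructive second bullet I would separate two cases. First suppose $\mbb{PP}=\{p_1<p_2\}$ consists of two integers that are not log-rational to each other. Then Theorem~\ref{theorem:GapsDoublyGenerated} supplies, for each $i$, the explicit interval with endpoints $L_i=p_2^{t_{i+1}-1}$ and $R_i=p_1^{s_i}p_2^{t_{i+1}-t_i-1}$ whose open interior meets $\langle \mbb{PP}\rangle$ in nothing. Since $\mbb{S}\sbq\langle \mbb{PP}\rangle$, the open interval $(L_i,R_i)$ also contains no point of $\mbb{S}$. Letting $a_{m_i}$ be the largest element of $\mbb{S}$ with $a_{m_i}\leq L_i$ and $a_{m_i+1}$ its successor in $\mbb{S}$, emptiness of $(L_i,R_i)$ forces $a_{m_i+1}\geq R_i$, so that
\equ{a_{m_i+1}-a_{m_i}\geq R_i-L_i,}
and by part (2) of the theorem these explicit lower bounds grow better than a geometric progression of common ratio $p_2$. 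Thus the explicitly described intervals $(L_i,R_i)$, with endpoints written in terms of the generators, certify arbitrarily large gaps in $\mbb{S}$, reproving the first bullet in this case.

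It remains to treat the degenerate cases: $\mbb{PP}=\{p\}$ with $p>1$ (including $\mbb{PP}=\{1,p\}$, where the generator $1$ is inert), or $\mbb{PP}=\{p_1,p_2\}$ with $p_1,p_2$ log-rational. In the first case $\langle \mbb{PP}\rangle=\{p^k:k\geq 0\}$; in the second, log-rationality together with unique factorization forces a common base, i.e.\ $p_1=m^c,\ p_2=m^d$ for some integer $m>1$ and positive integers $c,d$, so $\langle \mbb{PP}\rangle\sbq\{m^k:k\geq 0\}$. Hence $\mbb{S}\sbq\{m^k:k\geq 0\}$, say $\mbb{S}=\{m^k:k\in K\}$ for an infinite $K$. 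Any two consecutive exponents $k<k'$ in $K$ yield an $\mbb{S}$-gap
\equ{m^{k'}-m^{k}\geq m^{k+1}-m^{k}=m^{k}(m-1)\lra\infty,}
with explicit power-of-$m$ endpoints, settling both bullets in the degenerate regime.

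The step I expect to be the main obstacle is the passage from a gap of the ambient set $\langle \mbb{PP}\rangle$ to a genuine gap of the subset $\mbb{S}$: the endpoints $L_i,R_i$ produced by Theorem~\ref{theorem:GapsDoublyGenerated} lie in $\langle \mbb{PP}\rangle$ but need not lie in $\mbb{S}$, so they cannot simply be quoted as consecutive elements of $\mbb{S}$. The resolution is that emptiness of the open interval for the larger set is inherited by the smaller set, after which one brackets $(L_i,R_i)$ between the two true neighbours of $\mbb{S}$; here one genuinely uses that $\mbb{S}$ is infinite to guarantee the existence of the right-hand neighbour $a_{m_i+1}$. The remaining work is bookkeeping, namely verifying the density estimate and confirming the common-base reduction in the log-rational case via unique factorization.
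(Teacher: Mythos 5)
Your proposal is correct and follows essentially the same route as the paper: the same case split (single generator; log-rational pair reduced to a common base $m$ via unique factorization; log-irrational pair handled by Theorem~\ref{theorem:GapsDoublyGenerated}), with the explicit gap intervals of the ambient set $\langle\mbb{PP}\rangle$ inherited by the subset $\mbb{S}$ exactly as the paper intends. The only extra ingredient is the non-constructive counting argument for the first bullet, which is redundant (and already sketched in the paper's introductory survey) since, as you yourself note, the explicit intervals already certify $\limsup\,(a_{n+1}-a_n)=\infty$.
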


%\begin{cor}
%\label{cor:GapsDoublyGenerated}
%Let $\mbb{PP}=\{p_1<p_2\}$ be a set of two primes. Consider the
%multiplicatively closed set $\mbb{S}=\{1<a_1<a_2<\ldots<a_n<\ldots\} \subs
%\mbb{N}$ generated by $\mbb{PP}$. Then we have \equ{\us{n \lra
%\infty}{limsup} (a_{n+1}-a_{n}) = \infty.}
%\end{cor}

\subsection{Summary of the Proof}
~\\
We summarize the method of proof and the structure of the paper in this section.

In Section~\ref{sec:IrrBehRatApp} we first show that for any two relatively prime numbers $1<p<q$
the gaps between successive approximate inverses of $p\mod q$ is increasing in 
Theorem~\ref{theorem:IncreasingGaps}. In 
Theorems~\ref{theorem:StabInvariance},~\ref{theorem:Gaps} we prove for a sequence
of positive rationals converging to an irrational in $[0,1]$, the sequence of approximate 
inverses eventually stabilize and the gaps between successive approximate
inverses increase.

In Section~\ref{sec:TheoremGaps} we prove our main Theorem~\ref{theorem:GapsDoublyGenerated}. We 
consider a multiplicatively closed set $\mbb{S}$ generated by two positive numbers $p_1,p_2$,
%representing two distinct points in $\mbb{PF}^{\infty}_{\mbb{Q}_{\geq 0}}$. 
which are log irrational to each other i.e. $Log_{p_1}(p_2)$ is irrational.
We apply
Theorems~\ref{theorem:StabInvariance},~\ref{theorem:Gaps} to $\frac{1}{Log_{p_1}(p_2)}$
for a suitable sequence of positive rationals obtained in
Lemma~\ref{lemma:IncreasingLemma} and conclude increasing gaps for the stabilized sequence. Then 
we locate integer intervals in Lemma~\ref{lemma:EmptyLemma} of arbitrarily large size, which has 
no elements from the multiplicatively closed set $\mbb{S}$. This finally proves our main 
Theorem~\ref{theorem:GapsDoublyGenerated} and Corollary~\ref{cor:GapsDoublyGenerated}.

This Theorem~\ref{theorem:GapsDoublyGenerated} leads to an open Question~\ref{ques:Gaps}.
We state this open question in Section~\ref{sec:OQ}.

In an attempt to answer this open Question~\ref{ques:Gaps}, in Section~\ref{sec:GMTG}, we 
mention that a generalization of the proof of 
Theorem~\ref{theorem:GapsDoublyGenerated} is not directly feasible by proving 
Lemma~\ref{lemma:235} via an example.

In Section~\ref{sec:GeometryMultClosed} we associate to every multiplicatively closed set a 
point in the projective space $\mbb{PF}^{\infty}_{\mbb{Q}_{\geq 0}}$
and conversely to every point, a maximal singly generated multiplicatively closed set in 
Theorem~\ref{theorem:Bijection}. Then we characterize when two points
$P_1,P_2\in \mbb{PF}^{\infty}_{\mbb{Q}_{\geq 0}}$ give rise to the same point in terms of 
Log-Rationality in Theorem~\ref{theorem:Equivalence}. In Theorem~\ref{theorem:DoubleGen} we 
give a criterion for when a finitely generated multiplicatively closed set is contained in doubly 
generated multiplicatively closed set and in Theorem~\ref{theorem:LineClassification} we 
classify doubly multiplicatively closed lines(refer to 
Definition~\ref{defn:DoublyMultClosedLine}).

In view of Question~\ref{ques:Gaps}, if a multiplicatively closed set $\mbb{S}$ is generated by 
$r-$ elements and these generators give rise to $s-$distinct points in the projective space 
$\mbb{PF}^{\infty}_{\mbb{Q}_{\geq 0}}$ (refer to Definition~\ref{defn:ProjSpaceMult}) with 
$s \leq r$ then $\mbb{S}$ is contained in a multiplicatively closed set $\mbb{T}$, which is 
generated by $s-$elements. So Theorem~\ref{theorem:GapsDoublyGenerated} can be used to answer 
Question~\ref{ques:Gaps} whenever $s\leq 2$ with a known single generator or pair of generators 
in the affirmative using the same construction
(refer to Section~\ref{sec:TheoremGaps}). Even otherwise also, if these $s-$points
generate a doubly multiplicative closed line (refer to Definition~\ref{defn:DoublyMultClosedLine}
and Theorems~\ref{theorem:DoubleGen},\ref{theorem:LineClassification})
then Theorem~\ref{theorem:GapsDoublyGenerated} can be used to answer Question~\ref{ques:Gaps}
in the affirmative using the same construction(refer to 
Theorem~\ref{theorem:MultiplyGeneratedDoublyClosedLine}).

In Section~\ref{sec:GeometryMultClosed}, Theorem~\ref{theorem:DoubleGen} and 
Example~\ref{example:LineNotMultClosed} leads to the following
interesting question, which is answered completely in Theorem~\ref{theorem:LineClassification}.
\begin{ques}
\label{ques:ClassifyLines}
Classify all lines $L$ obtained by joining two points $P_1,P_2 \in 
\mbb{PF}^{\infty}_{\mbb{Q}_{\geq 0}} \subs \mbb{PF}^{\infty}_{\mbb{Q}}$, which are 
doubly multiplicatively closed lines.
\end{ques}

\section{\bf{Irrationals and behaviour of rational approximations, arithmetic
progressions,stabilization}} \label{sec:IrrBehRatApp}
We start this section by proving a theorem below on increasing gaps for the successive 
approximate inverses.
\begin{theorem}[Increasing gaps between successive approximate inverses]
\label{theorem:IncreasingGaps} Let $p,q$ be two positive integers
with $gcd(p,q)=1,p<q$. Consider the arithmetic progressions
$p\Z^{+}$ and $q\Z^{+}$. Consider the sequence $(p\Z^{+} \cup
q\Z^{+} \cup \{0\})\cap \{0,1,2,\ldots,qp\}$ in the set
$\{0,1,2,\ldots,qp\}$.
\equa{&l_0=0,p,2p,3p,\ldots,l_1p,q,\\
&(l_1+1)p,(l_1+2)p,\ldots,l_2p,2q,\\
&(l_2+1)p,\ldots,l_ip,iq,\\
&(l_i+1)p,\ldots,(q-1)p,qp.} Now consider the sequence of numbers
\equa{&\{l_0=0\} \cup \{l_j \mid q \geq j\geq 1,%(l_j+1)p-jq \mid
(l_j+1)p-jq < \us{0\leq i<j}{min}\{(l_i+1)p-iq\}\}\\&
=\{l_{j_1},l_{j_2},\ldots,l_{j_r}\}}
\begin{center}
$
\begin{cases}
=\{0=l_0=l_{j_1}=p^{-1}-1\mod\ q\} \text{ if } p=1\\
=\{0=l_{j_1}=l_0 < l_{j_2} = l_1 < l_{j_3}< \ldots <
(l_{j_r}=p^{-1}-1\mod\ q)\}  \text{ if } p \neq 1.
\end{cases}
$
\end{center}
Then the gaps $l_{j_{i+1}}-l_{j_i}$ in the above sequence is
increasing.
\end{theorem}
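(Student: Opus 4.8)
The plan is to translate the defining inequality into a statement about one-sided best rational approximations and then read off the monotonicity from the continued-fraction (Euclidean) structure. First I would set $v_j = (l_j+1)p - jq$ and record that $l_j = \lfloor jq/p\rfloor$, so that $v_j = p - (jq \bmod p) \in \{1,2,\ldots,p\}$, with $v_0 = p$. In this language the indices $j_1 = 0 < j_2 < \cdots < j_r$ singled out by the boxed condition are exactly those at which $v_j$ takes a value strictly smaller than at every earlier index; writing $V_i = v_{j_i}$ and $L_i = l_{j_i}$ we get $p = V_1 > V_2 > \cdots > V_r = 1$, the terminal value $1$ occurring because $jq \equiv -1 \pmod p$ is solvable and encoding $L_r + 1 = p^{-1} \bmod q$. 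The elementary identity $L_{i+1} - L_i = \big((j_{i+1}-j_i)q - (V_i - V_{i+1})\big)/p$ shows that, with $m_i := L_i + 1$ the numerators and $j_i$ the denominators of the fractions $m_i/j_i > q/p$, the quantity to be controlled, namely $L_{i+1} - L_i = m_{i+1} - m_i$, is precisely the numerator increment between consecutive best approximations of $q/p$ from above.

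Next I would expose the batch (Euclidean) structure. Setting $c = q \bmod p$ one has $v_j = p - (jc \bmod p)$, so the records depend only on $(c,p)$; as $j$ runs through $0,1,2,\ldots$ the residue $jc \bmod p$ increases by $c$ at each step until it first exceeds $p$, which shows that the initial run of records is the arithmetic progression $j = 0,1,\ldots,a_1$ with $a_1 = \lfloor p/c\rfloor$ and constant numerator increment. Writing the Euclidean step $p = a_1 c + c_1$ with $0 \le c_1 < c$, the search for the next record after this run becomes the condition $v_j \in \{1,\ldots,c_1-1\}$, which is an affine copy of the same problem for the smaller pair obtained from $(c,p)$ by one Euclidean step. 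I would isolate this as a reduction lemma and induct on $p$ (equivalently on the number of partial quotients), the base case $c = 1$ giving the single run $j = 0,1,\ldots,p-1$ with all gaps equal. The output is that the record indices split into consecutive batches, one per best-approximation stage, inside each of which both $j_i$ and $L_i$ form arithmetic progressions; the common $L$-difference in the $s$-th batch equals the even-indexed convergent numerator $P_{2s-2}$ of $q/p$, and the gap into a new batch already equals the new (larger) common difference.

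Monotonicity is then immediate: by the convergent recurrence $P_{2s} = a_{2s}P_{2s-1} + P_{2s-2} > P_{2s-2}$, the per-batch increments satisfy $P_0 < P_2 < P_4 < \cdots$, while inside any single batch the increment is constant. Hence the full sequence of gaps $L_{i+1}-L_i$ is non-decreasing, constant along each batch and jumping upward at each batch boundary, which is the assertion.

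I expect the main obstacle to be the reduction lemma of the second paragraph: proving cleanly that, after peeling off the initial arithmetic run, the remaining records are an exact affine image of the records for the Euclidean-reduced pair, with correctly matched numerators and with strictly larger spacing. The two boundary subtleties that must be handled with care are the degenerate initial record $m_1/j_1 = 1/0$ and the truncation of the final batch forced by the terminal value $V_r = 1$ (the exact value $q/p$ itself is never a record); everything else is either the routine identity above or the standard convergent recurrence.
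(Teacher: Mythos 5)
Your proposal is correct in outline, but it proves the theorem by a genuinely different route from the paper. The paper's argument is a direct, self-contained comparison of three consecutive records: writing $(l_{j_i}+1)p=k_{j_i}q+x_{j_i}$, it translates the block of indices between the $i$-th and $(i+1)$-th records forward by $l_{j_{i+1}}-l_{j_i}$ and tracks the residues $x$ modulo $q$, showing that no index $l_{j_{i+1}}+1+t$ with $t<l_{j_{i+1}}-l_{j_i}$ can beat the current record, and that at $t=l_{j_{i+1}}-l_{j_i}$ one either gets a new record (gap equal) or not (gap strictly larger); no continued-fraction machinery appears. You instead identify the records with the one-sided best approximations $m_i/j_i>q/p$ (your translation $v_j=p-(jq\bmod p)$, the identity $L_{i+1}-L_i=m_{i+1}-m_i$, and the terminal value $V_r=1$ are all correct) and then invoke the semiconvergent/Euclidean-renormalization structure: batches with constant increment $P_{2s-2}$, strictly increasing across batches by the convergent recurrence. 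I checked your indexing on small cases (e.g.\ $q/p=8/5$ gives records $1/0,2/1,5/3$ with increments $P_0=1$, $P_2=3$) and it is consistent. What your route buys is strictly more information --- the exact values of the gaps, namely $P_{2s-2}$ repeated $a_{2s-1}$ times, and the precise locations of the strict jumps --- whereas the paper's route only yields $l_{j_{i+2}}-l_{j_{i+1}}\geq l_{j_{i+1}}-l_{j_i}$ (note that "increasing" in the statement is non-strict in both treatments, since within a batch, equivalently in the paper's case $2x_{j_{i+1}}>x_{j_i}$, the gaps are equal). The cost is that your reduction lemma, which you correctly flag as the crux, is only sketched: you still need to prove that after peeling off the initial run $j=0,\dots,\lfloor p/c\rfloor$ the remaining records are an exact affine image of the records for the Euclidean-reduced pair, and to handle termination when $c_1=1$. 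That lemma is a standard (three-distance-theorem style) fact and will go through, but as written your argument is a plan resting on it rather than a complete proof, whereas the paper's shorter residue comparison is already self-contained.
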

\begin{proof}
If $p=1$ then there is nothing to prove. So assume $p>1$.
First we
observe that $p$ is a unit in $\Z/q\Z=\{0,1,2\ldots,q-1\}$. The
values $(l_i+1)$ tend to the inverse of $p$ because the least
possible value for $(l_i+1)p-iq$ is one. If we consider the sequence
of multiples $\{(l_{j_1}+1)p\ mod\ q,(l_{j_2}+1)p\ mod\
q,\ldots,(l_{j_r}+1)p\ mod\ q\}$
then the values are distinct and decrease to $1$ as multiplies of $p$ given by
$0,p,2p,\ldots,(q-1)p$ gives rise to all residue classes modulo $q$.
Now suppose we consider three consecutive elements in the sequence 
$l_{j_i},l_{j_{i+1}},l_{j_{i+2}}$ then we have
\equa{(l_{j_i}+1)p &= k_{j_i}q+x_{j_i}\\
(l_{j_{i+1}}+1)p &= k_{j_{i+1}}q+x_{j_{i+1}}\\
(l_{j_{i+2}}+1)p &= k_{j_{i+2}}q+x_{j_{i+2}}\\
\text{ and the residue classes satisfy }&
x_{j_i}>x_{j_{i+1}}>x_{j_{i+2}}} and moreover for any $t <
l_{j_{i+1}}-l_{j_i}$ we have \equ{\text{ if } (l_{j_i}+1+t)p=kq+x
\text{ then } x > x_{j_{i}}} because of the minimality condition on
$(l_{j_i}+1)p-j_iq$ as the lesser than $(l_{j_i}+1)$ multiples of
$p$ are not as close to multiples of $q$, where we compare multiples
of $p$ to numbers, which are smaller and multiples of $q$. So we have
\equ{(l_{j_{i+1}}+1+t)p=(l_{j_{i+1}}-l_{j_i})p+(l_{j_i}+1+t)p=
(k_{j_{i+1}}-k_{j_i}+k)q+x_{j_{i+1}}-x_{j_i}+x.}
Now note in the right hand side we have the following inequalities
for the residue classes $mod\ q$.
\equa{&0< x_{j_i} < q\\
&0< x_{j_{i+1}} < q\\
&0 < x_{j_i} - x_{j_{i+1}} < q\\
&0<x_{j_{i+1}}< x_{j_{i+1}}-x_{j_i}+x < x < q} This is a subtle
argument about the residue classes. Hence we have $l_{j_{i+2}} >
l_{j_{i+1}}+t$ for all $t < l_{j_{i+1}}-l_{j_i}$ and for
$t=l_{j_{i+1}}-l_{j_i}$ we have $x=x_{j_{i+1}}$ so a candidate for
the residue class is $(2x_{j_{i+1}}-x_{j_i})$ and
\equ{(l_{j_{i+1}}+1+t)p=(2k_{j_{i+1}}-k_{j_i})q+
(2x_{j_{i+1}}-x_{j_i}).} So we have if $0< (2x_{j_{i+1}}-x_{j_i})$
then the residue class is $(2x_{j_{i+1}}-x_{j_i})$ and \equ{0<
(2x_{j_{i+1}}-x_{j_i}) = x_{j_{i+1}}+x_{j_{i+1}}-x_{j_i} <
x_{j_{i+1}}<q.} So $l_{j_{i+2}}=2l_{j_{i+1}}-l_{j_i}$ or
$l_{j_{i+2}}-l_{j_{i+1}}=l_{j_{i+1}}-l_{j_i}$. Otherwise if $0 <
x_{j_{i+1}}< 2x_{j_{i+1}} < x_{j_i}<q$ then the residue class is
given by $q+2x_{j_{i+1}}-x_{j_i}$ and we observe
that\equ{q>q+2x_{j_{i+1}}-x_{j_i}>x_{j_{i+1}} \text{ because } q >
q+x_{j_{i+1}}-x_{j_i}>0} we conclude that $l_{j_{i+2}} >
2l_{j_{i+1}}-l_{j_i}$ or
$l_{j_{i+2}}-l_{j_{i+1}}>l_{j_{i+1}}-l_{j_i}$. It is also clear that
the residue classes decrease to one.
Now Theorem~\ref{theorem:IncreasingGaps} follows.
The sequence \equ{\{1=l_0+1=l_{j_1}+1,l_{j_2}+1,\ldots,l_{j_r}+1\}} is the sequence of 
approximate inverses of $p\mod q$.
\end{proof}
The following theorem is a stabilization theorem for approximate inverses for a 
converging sequence of rationals. 
\begin{theorem}[Stabilization and Eventual Invariance]
\label{theorem:StabInvariance}
Let $p_n,q_n$ be a sequence of positive integers with
$gcd(p_n,q_n)=1$ and suppose $\frac{p_n}{q_n}$ is a cauchy sequence
converging to an irrational number $0<\ga<1$. Define as in the
previous lemma the sequence $l_i(n)$ and consider the set
\equa{\{0=l_{j_1(n)}(n)<l_{j_2(n)}(n)=l_1(n)<l_{j_3(n)}(n)<\ldots&<l_{j_{r_n}(n)}(n)\\
&=p_n^{-1}-1\mod\ q_n\}.}
The values $j_i(n)$ stabilize and also $l_{j_i(n)}(n)$
is eventually a constant as $n \lra \infty$ for a stabilized $j_i$.
\end{theorem}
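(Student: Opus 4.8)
The plan is to translate the entire selection rule into a statement about the fractional parts $\{m\gb\}$ of the multiples of the single irrational $\gb=\frac1\ga$, and then to exploit that, on any fixed initial segment, the pattern of ``record'' indices is cut out by finitely many strict inequalities. First I would set up the elementary dictionary. Since $\gcd(p_n,q_n)=1$ and $\frac{p_n}{q_n}\to\ga$ is irrational, both $p_n,q_n\lra\infty$. Writing $\gb_n=\frac{q_n}{p_n}$ (so $\gb_n\to\gb=\frac1\ga$), the defining inequalities $l_m(n)p_n\le mq_n<(l_m(n)+1)p_n$ give $l_m(n)=\lfloor m\gb_n\rfloor$ for $0\le m<p_n$, whence
\equ{(l_m(n)+1)p_n-mq_n=p_n\big(1-\{m\gb_n\}\big).}
Consequently $m$ is selected --- i.e. $(l_m(n)+1)p_n-mq_n$ is strictly smaller than $(l_i(n)+1)p_n-iq_n$ for every $0\le i<m$ --- exactly when $\{m\gb_n\}>\{i\gb_n\}$ for every $0\le i<m$, i.e. when $m$ is a \emph{record index} of $\big(\{m\gb_n\}\big)_{m\ge0}$. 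Thus $j_1(n)<j_2(n)<\cdots$ are precisely these record indices, and $l_{j_i(n)}(n)=\lfloor j_i(n)\gb_n\rfloor$.

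Next I would analyse the limiting sequence $\big(\{m\gb\}\big)_{m\ge0}$. Because $\gb$ is irrational its terms are pairwise distinct and its record indices $J_1<J_2<\cdots$ form an infinite increasing sequence (these are the denominators of the one-sided best rational approximations of $\gb$, equivalently certain continued-fraction convergents). Fix a rank $i$ and set $I=J_i$. On the finite window $0\le m\le I$, the assertion ``$m$ is, or is not, a record'' is a finite conjunction of inequalities $\{m\gb\}>\{m'\gb\}$ (for records) and $\{m\gb\}<\max_{0\le m'<m}\{m'\gb\}$ (for non-records); irrationality of $\gb$ forces each of them to be strict, hence to hold with a common margin $\varepsilon>0$.

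Then I would pass to the limit. For each fixed $m\in\{1,\ldots,I\}$ the number $m\gb$ is not an integer, so $\{m\gb_n\}\to\{m\gb\}$; since only finitely many $m\le I$ are involved, for all large $n$ every one of the finitely many $\varepsilon$-strict inequalities persists with $\gb_n$ in place of $\gb$. As $p_n\lra\infty$ eventually exceeds $I$, the truncation $m<p_n$ inherent in the rational problem does not affect the window $0\le m\le I$. Hence the record pattern of $\big(\{m\gb_n\}\big)$ on this window matches that of $\gb$, i.e. $j_1(n)=J_1,\ldots,j_i(n)=J_i$ for all large $n$: the $j_i(n)$ stabilize. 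Once $j_i(n)=J_i$ is constant, $l_{j_i(n)}(n)=\lfloor J_i\gb_n\rfloor\to\lfloor J_i\gb\rfloor$ is eventually the constant $\lfloor J_i\gb\rfloor$, which is the claimed eventual invariance.

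The step I expect to be the main obstacle is exactly this passage to the limit: the fractional-part map is discontinuous, so a priori the perturbation $\gb_n-\gb\to0$ could create a spurious record or destroy an existing one. The argument circumvents this by working only on a \emph{fixed} initial segment $0\le m\le I$, where the record pattern is equivalent to finitely many inequalities that are strict by irrationality of $\gb$ and hence stable; combined with $p_n\lra\infty$, this rules out any such pathology for large $n$.
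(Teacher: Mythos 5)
Your argument is correct and is essentially the paper's proof in different clothing: the paper divides the defining inequality by $q_n$ and observes that $(l_i(n)+1)\frac{p_n}{q_n}-i$ converges to the $n$-independent quantity $\big(\lfloor \frac{i}{\ga}\rfloor+1\big)\ga-i$, which is exactly your $\ga\big(1-\{i\gb\}\big)$, and concludes stabilization from there. You additionally make explicit two points the paper leaves implicit --- that the limiting values on a fixed initial window are pairwise distinct by irrationality of $\gb$, so the finitely many strict record inequalities persist with a uniform margin, and that the discontinuity of the fractional-part map is harmless because $m\gb$ is never an integer --- a welcome tightening, but not a different method.
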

\begin{proof}
We can assume that $p_n < q_n$ and $p_n \neq 1$. If $p_n=1$ for
infinitely many positive integer $n>0$ then $\frac{p_n}{q_n} \lra 0$, which is a contradiction.
We observe that $l_i(n)=\lfloor\frac{iq_n}{p_n}\rfloor$ and for
fixed $i,l_i(n)$ is eventually $\lfloor \frac{i}{\ga} \rfloor$ as $n
\lra \infty$. Also we have the sequence $j_i(n)$ stabilizes as $n
\lra \infty$ because in the inductive definition, we have $j_i(n)$
satisfies the property that \equ{(l_{j_i(n)}(n)+1)p_n-j_i(n)q_n <
\us{0 \leq i < j_i(n)}{min}\{(l_i(n)+1)p_n-iq_n\}} or equivalently
that \equ{(l_{j_i(n)}(n)+1)\frac{p_n}{q_n}-j_i(n)<\us{0 \leq i <
j_i(n)}{min}\{(l_i(n)+1)\frac{p_n}{q_n}-i\}.} Now if $n \lra \infty$
then we get that $(l_i(n)+1)\frac{p_n}{q_n}-i \lra \big(\lfloor
\frac{i}{\ga}\rfloor+1\big)\ga-i$, which is independent of $n$. Now the independence of $n$ here
implies the stabilization of $j_i(n)$ follows as $n \lra \infty$. This
completes the proof of Theorem~\ref{theorem:StabInvariance}.
\end{proof}
The theorem below along with Weyl Equidistributive Criterion, 
Theorem~\ref{theorem:WeylEquiCrit} establishes the increasing nature of 
gaps in the stabilized approximate inverses for a converging sequence of 
rationals to an irrational number.
\begin{theorem}
\label{theorem:Gaps} Let $p_n,q_n$ be a sequence of positive
integers with $gcd(p_n,q_n)=1$ with $p_n<q_n$ and suppose $\frac{p_n}{q_n}$ is a
cauchy sequence converging to an irrational number $0 < \ga < 1$. Define as
in the previous lemma the sequence $l_i(n)$ and consider the set
\equa{\{0=l_{j_1(n)}(n)<l_{j_2(n)}(n)=l_1(n)<l_{j_3(n)}(n)<\ldots&<l_{j_{r_n}(n)}(n)\\
&=p_n^{-1}-1\mod\ q_n\}.} Using the previous lemma let $j_i=\us{n \lra \infty
}{lim}j_i(n),l_i=\us{n \lra \infty }{lim}l_i(n)$. Then we have
\equ{\us{i \lra \infty}{lim}l_{j_{i+1}}-l_{j_{i}}=\infty.}
\end{theorem}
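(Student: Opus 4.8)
The plan is to reduce the statement to a clean fact about the left‑to‑right minima of the sequence $\{m\ga\}$, $m=1,2,3,\ldots$, where $\{\cdot\}$ denotes the fractional part, and then to derive a contradiction from the hypothesis that the gaps remain bounded. First I would identify the limiting objects. From the proof of Theorem~\ref{theorem:StabInvariance} we have $l_i(n)=\lfloor iq_n/p_n\rfloor \lra \lfloor i/\ga\rfloor$, so I set $m_i=l_{j_i}+1=\lceil j_i/\ga\rceil$. A direct computation shows that the normalized residues converge, $x_{j_i}(n)/q_n=(l_{j_i}+1)\tfrac{p_n}{q_n}-j_i \lra (l_{j_i}+1)\ga-j_i=\ga\big(1-\{j_i/\ga\}\big)=\{m_i\ga\}=:r_i\in(0,1)$. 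The defining minimality property of the $j_i$ (that $(l_{j}+1)p_n-jq_n$ beats all earlier values) then passes in the limit to the statement that the $m_i$ are indices at which $m\mapsto\{m\ga\}$ attains a new left‑to‑right minimum, so that $r_1>r_2>\cdots$ is strictly decreasing. By the Weyl Equidistributive Criterion (Theorem~\ref{theorem:WeylEquiCrit}) the sequence $\{m\ga\}$ is equidistributed, in particular dense near $0$; hence there are infinitely many such records and $r_i\searrow 0$.

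Second, I would observe that the gap in the statement is exactly the gap between consecutive records, $l_{j_{i+1}}-l_{j_i}=m_{i+1}-m_i=:g_i$, and that $g_i$ is non‑decreasing in $i$. Indeed, for each fixed $i$ choose $n$ so large that $l_{j_i}(n),l_{j_{i+1}}(n),l_{j_{i+2}}(n)$ have already stabilized to their limits, and apply the increasing‑gap conclusion of Theorem~\ref{theorem:IncreasingGaps} at that $n$; letting $n\lra\infty$ preserves the inequality $g_i\le g_{i+1}$.

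Third comes the crux, by contradiction. If the $g_i$ were bounded, then being non‑decreasing integers they would be eventually constant, say $g_i=g$ for all $i\ge i_0$. For such $i$ we have $r_{i+1}=\{m_{i+1}\ga\}=\{r_i+g\ga\}=\{r_i+\{g\ga\}\}$, where $\{g\ga\}\in(0,1)$ is a fixed nonzero constant (nonzero since $\ga$ is irrational). But $r_i\searrow 0$, so once $r_i<1-\{g\ga\}$ there is no wraparound and $\{(m_i+g)\ga\}=r_i+\{g\ga\}>r_i$; thus $m_i+g$ fails to be a new minimum, contradicting $m_{i+1}=m_i+g$ and $r_{i+1}<r_i$. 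Hence the gaps are unbounded, and being non‑decreasing they tend to infinity, which is precisely $\us{i\lra\infty}{\lim}\,(l_{j_{i+1}}-l_{j_i})=\infty$.

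I expect the main obstacle to lie in the first step: rigorously matching the finite‑$n$ "approximate inverse / decreasing residue" data with the limiting record‑minima description of $\{m\ga\}$, and verifying that both the strict monotonicity $r_i>r_{i+1}$ and the convergence $r_i\to 0$ survive the passage $n\lra\infty$. Once that reformulation is secured, the equidistribution input is used only to guarantee $r_i\searrow 0$, and the constant‑gap contradiction together with the non‑decreasing property from Theorem~\ref{theorem:IncreasingGaps} finishes the argument in a few lines.
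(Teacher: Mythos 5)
Your proof is correct and follows the same skeleton as the paper's: both establish that the stabilized gaps $l_{j_{i+1}}-l_{j_i}$ are non-decreasing (by applying Theorem~\ref{theorem:IncreasingGaps} at a stabilized large $n$), both note that the limiting residues $r_i=(l_{j_i}+1)\ga-j_i$ decrease to $0$, and both argue by contradiction from the assumption that the gaps are eventually a constant $d$. The difference lies in how the contradiction is extracted. The paper applies the Weyl criterion to the arithmetic progression $(l_{j_{i_0}}+kd+1)\ga$, concluding that its fractional parts are equidistributed in $[0,1]$ and therefore cannot tend to $0$. You instead use the identity $r_{i+1}=\{r_i+\{d\ga\}\}$ and observe that once $r_i<1-\{d\ga\}$ there is no wraparound, so $r_{i+1}=r_i+\{d\ga\}>r_i$, contradicting the strict decrease of the record minima; this needs only $\{d\ga\}\neq 0$, i.e.\ irrationality of $\ga$, rather than full equidistribution. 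Your route confines the equidistribution input to showing $r_i\searrow 0$ (where density of $\{m\ga\}$ near $0$ already suffices, a point the paper asserts without your record-minima reformulation), so your contradiction step is more elementary and arguably more transparent, while the paper's is shorter but leans on a stronger tool. Your identification of the stabilized approximate inverses with the left-to-right record minima of $m\mapsto\{m\ga\}$ is faithful to Theorems~\ref{theorem:StabInvariance} and~\ref{theorem:IncreasingGaps}, and making it explicit is the main extra work your version requires.
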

\begin{proof}
We can assume that $p_n \neq 1$ eventually.
We observe that using the previous Theorem\ref{theorem:StabInvariance}
we have for every \equ{i \in \mbb{N}, l_{j_{i+2}}-l_{j_{i+1}} \geq l_{j_{i+1}}-l_{j_{i}}.}
If the above limit is not infinity (say equal to $d$) then eventually $l_{j_i}$ form an 
arithmetic progression with common difference $d$. Then
$(l_{j_i}+1)=\lceil \frac{j_i}{\ga}\rceil$ is in  arithmetic progression with common difference 
$d$. On the one hand the sequence
\equ{\lceil \frac{j_i}{\ga}\rceil \ga - j_i \searrow 0} On the other hand the sequence has a 
distribution if $l_{j_i}$ are in arithemetic progression.
Because if $l_{j_i}=l_{j_{i_0}}+kd$ with $k \in \mbb{N}$ and fractional parts $z_{j_i}$ are 
such that $\frac{j_i}{\ga}+ z_{j_i} = \lceil \frac{j_i}{\ga}\rceil = l_{j_i}+1$. Then we get 
$(l_{j_{i_0}}+kd+1)\ga-j_i = z_{j_i}\ga \searrow 0$. However the fractional
parts $\{(l_{j_{i_0}}+kd+1)\ga-j_i\}=\{(l_{j_{i_0}}+kd+1)\ga\}$ are distributed in the unit 
interval uniformly as $k \in \mbb{N}$ by Weyl's Criterion, Theorem~\ref{theorem:WeylEquiCrit}.
So this is a contradiction and Theorem~\ref{theorem:Gaps} follows.
\end{proof}
We mention Weyl's Equidistributive Criterion here (See also~\cite{HWEYL}.)
\begin{theorem}
\label{theorem:WeylEquiCrit}
Let $\ga$ be a positive irrational. Let $0 \leq  a \leq b \leq 1$. For $x \in \mbb{R}^{+}$, 
let $\{x\}$ denote the fractional part of $x$.  Then we have
\equ{\frac{\#\{n\mid a \leq \{n\ga\} \leq b, 1 \leq n \leq N\}}{N} \lra (b-a) 
\text{ as }N\lra \infty.}
\end{theorem}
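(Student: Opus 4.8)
The plan is to prove this as the classical Weyl equidistribution theorem, via the exponential-sum (Fourier-analytic) form of Weyl's criterion. The statement is equivalent to the assertion that the sequence $(\{n\ga\})_{n\geq 1}$ is equidistributed in $[0,1]$, so first I would recast the counting quantity as an average: writing $\mathbf{1}_{[a,b]}$ for the indicator function of $[a,b]$, the numerator on the left is exactly $\sum_{n=1}^{N}\mathbf{1}_{[a,b]}(\{n\ga\})$, while the target value $b-a$ equals $\int_0^1 \mathbf{1}_{[a,b]}(x)\,dx$. Thus it suffices to show that for a suitable class of test functions $f$ on $[0,1]$ (extended periodically),
\[
\frac{1}{N}\us{n=1}{\os{N}{\sum}} f(\{n\ga\}) \lra \int_0^1 f(x)\,dx \qquad (N\lra\infty).
\]

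Second, I would establish this convergence first for the characters $f(x)=e^{2\pi i k x}$ with $k\in\Z$. For $k=0$ both sides equal $1$. For $k\neq 0$ the sum is geometric with ratio $z=e^{2\pi i k\ga}$,
\[
\us{n=1}{\os{N}{\sum}} e^{2\pi i k n\ga} = z\,\frac{z^{N}-1}{z-1},
\]
and here the decisive point is that, since $\ga$ is irrational, $k\ga\notin\Z$ for $k\neq 0$, so the denominator $z-1$ is nonzero. As $|z|=1$ the whole sum is bounded in absolute value by $\frac{2}{|z-1|}$, a constant independent of $N$; dividing by $N$ gives a quantity tending to $0=\int_0^1 e^{2\pi i k x}\,dx$. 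By linearity the convergence then holds for every trigonometric polynomial.

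Third, I would upgrade from trigonometric polynomials to the discontinuous indicator $\mathbf{1}_{[a,b]}$ by a standard sandwiching argument. Given $\gep>0$, choose continuous $1$-periodic functions $g^{-}\leq \mathbf{1}_{[a,b]}\leq g^{+}$ with $\int_0^1 (g^{+}-g^{-})\,dx<\gep$; this is possible because the boundary $\{a,b\}$ has measure zero. By the Weierstrass (Fej\'er) approximation theorem each of $g^{\pm}$ is a uniform limit of trigonometric polynomials, so the convergence of the previous step passes to $g^{\pm}$. Inserting the inequalities $g^{-}\leq \mathbf{1}_{[a,b]}\leq g^{+}$ into the average, letting $N\lra\infty$ and then $\gep\lra 0$, squeezes $\frac{1}{N}\sum_{n=1}^{N}\mathbf{1}_{[a,b]}(\{n\ga\})$ between $(b-a)-\gep$ and $(b-a)+\gep$, which yields the claimed limit $b-a$.

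The main obstacle is precisely this last step. The character computation is elementary and rests only on the nonvanishing of the geometric denominator, which is exactly where irrationality of $\ga$ enters. The genuinely delicate part is the transfer from continuous test functions to the indicator of a closed interval: one must approximate $\mathbf{1}_{[a,b]}$ from above and below and exploit that the limiting value $b-a$ is insensitive to the behaviour at the two endpoints (a null set). Once this sandwiching is in place, everything assembles into the stated density limit.
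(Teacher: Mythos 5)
Your proposal is a correct and complete rendition of the classical Fourier--analytic proof of Weyl equidistribution for the sequence $(\{n\ga\})$. Note, however, that the paper itself offers no proof of Theorem~\ref{theorem:WeylEquiCrit}: it is quoted as a known result with a citation to Weyl's 1916 paper, so there is no internal argument to compare against. Your three-step structure --- (i) the exact geometric-sum bound $\bigl|\sum_{n=1}^{N}e^{2\pi i kn\ga}\bigr|\leq 2/|e^{2\pi i k\ga}-1|$ for $k\neq 0$, valid precisely because irrationality forces $k\ga\notin\Z$; (ii) linearity to trigonometric polynomials; (iii) Fej\'er approximation plus sandwiching of $\mathbf{1}_{[a,b]}$ between continuous periodic minorants and majorants whose integrals differ by at most $\gep$ --- is exactly the standard route, and each step is sound. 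The one point worth stating explicitly when you write this up is the degenerate case $a=b$ (and the periodic identification of the endpoints $0$ and $1$): there the lower sandwich function must be taken to be $0$, and the conclusion is that the counting density tends to $0$, which is consistent with $b-a=0$ since the boundary is a null set; your remark that the limit is insensitive to behaviour on $\{a,b\}$ already covers this, but it deserves a sentence rather than an aside.
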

\section{\bf{The main theorem and construction of arbitrarily large gaps}}
\label{sec:TheoremGaps}
Before we prove the main Theorem~\ref{theorem:GapsDoublyGenerated} 
of this section we prove the following three 
Lemmas~\ref{lemma:Irrational},~\ref{lemma:IncreasingLemma},~\ref{lemma:EmptyLemma}.
\begin{lemma}
\label{lemma:Irrational}
Let $p_1<p_2$ be two natural numbers such that $gcd(p_1,p_2)=1$. Then
\begin{itemize}
\item Either $p_1=1$.
\item Or $Log_{p_1}(p_2),Log_{p_2}(p_1)$ are both simultaneously irrationals.
\end{itemize}
\end{lemma}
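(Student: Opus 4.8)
The plan is to reduce the two-part conclusion to a single irrationality statement and then settle it by unique factorization. First I would observe that the two quantities rise and fall together: since $Log_{p_2}(p_1) = 1/Log_{p_1}(p_2)$, and the reciprocal of a nonzero rational is rational while the reciprocal of an irrational is irrational, the numbers $Log_{p_1}(p_2)$ and $Log_{p_2}(p_1)$ are rational or irrational simultaneously. (Both are nonzero, indeed positive, because $p_1 > 1$ and $p_2 > 1$.) Consequently, in the second case of the dichotomy I may assume $p_1 \neq 1$, hence $p_1 \geq 2$ and $p_2 > p_1 \geq 2$, and it suffices to prove that $Log_{p_1}(p_2)$ alone is irrational.

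Next I would argue by contradiction. Suppose $Log_{p_1}(p_2) = a/b$ with $a,b$ positive integers and $gcd(a,b) = 1$; both $a,b \geq 1$ since the logarithm is positive. Exponentiating to base $p_1$ yields $p_2 = p_1^{a/b}$, that is, the integer relation $p_2^{b} = p_1^{a}$ between genuine powers of integers $\geq 2$. I can then compare the two sides through the fundamental theorem of arithmetic.

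The key step, and the only place the coprimality hypothesis enters, is the following. Because $p_1 \geq 2$, it has at least one prime divisor $r$; then $r \mid p_1^{a} = p_2^{b}$ forces $r \mid p_2$. Thus $r$ is a common prime factor of $p_1$ and $p_2$, contradicting $gcd(p_1,p_2) = 1$. This contradiction shows that $Log_{p_1}(p_2)$ cannot be rational, and by the reciprocal remark $Log_{p_2}(p_1)$ is irrational as well, which establishes the second alternative.

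I do not expect any genuine obstacle: the argument is elementary once one makes the reduction to a single logarithm and invokes unique factorization. The only points deserving a word of care are recording that $a,b \geq 1$ (so that $p_2^{b} = p_1^{a}$ is a nondegenerate relation) and noting explicitly that the reciprocal identity $Log_{p_2}(p_1) = 1/Log_{p_1}(p_2)$ transfers irrationality between the two expressions, so that proving one case suffices for both.
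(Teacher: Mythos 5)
Your proof is correct and follows essentially the same route as the paper: assume $Log_{p_1}(p_2)=a/b$ is rational, exponentiate to get $p_2^{b}=p_1^{a}$, and derive a contradiction from unique factorization. You are in fact slightly more careful than the paper, which asserts the contradiction without spelling out that it is the coprimality hypothesis (a common prime factor $r$ of $p_1^{a}=p_2^{b}$ would divide both $p_1$ and $p_2$) that makes the relation impossible, and without recording the reciprocal identity that transfers irrationality to $Log_{p_2}(p_1)$.
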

\begin{proof}
If $p_1=1$ then there is nothing to prove.
Suppose $Log_{p_1}(p_2)=\frac mn$ for some positive integers $m,n>0$.
Then we have $p_2^n=p_1^m$ a contradiction to unique factorization into primes.
So $Log_{p_1}(p_2)$ is irrational.
\end{proof}
\begin{defn}
We say a pair $(p_1,p_2)\in \mbb{N}^2$ is an irrational pair if $p_1\neq 1$ and $p_2\neq 1$ and 
both $Log_{p_1}(p_2),Log_{p_2}(p_1)$ are irrationals. For example a $GCD-$one pair 
$(p_1,p_2) \in \mbb{N}^2,$
where $p_1 \neq 1 \neq p_2$ is an irrational pair.
\end{defn}
\begin{lemma}
\label{lemma:IncreasingLemma}
Let $(p_1,p_2)\in \mbb{N}^2$ be such that $p_1<p_2$ and is an irrational pair.
Let $\ga=Log_{p_2}(p_1)<1$. Let $x_2(i)=\lceil\frac{i}{\ga}\rceil$. For
every positive integer $i$ let \equ{z_i=-i+x_2(i)\ga.} Define
a subsequence with the property that
\equ{z_{k_j}<z_{k_{j-1}}=min\{z_1,z_2,\ldots,z_{k_j-1}\}.} Then
\begin{enumerate}
\item $z_{k_j} \searrow 0$.
\item $k_j-k_{j-1}$ is increasing.
\item $\us{j \lra \infty}{lim}
(k_j-k_{j-1}) = \infty.$
\end{enumerate}
\end{lemma}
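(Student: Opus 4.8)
The plan is to reduce everything to the orbit of an irrational rotation and then run an argument parallel to Theorem~\ref{theorem:IncreasingGaps}. First I would put $z_i$ in a usable closed form. Writing $\gb=\frac{1}{\ga}>1$ and using $\lceil x\rceil = x+(1-\{x\})$ for $x\nin\Z$ (valid here because $\ga$, hence each $i\gb$, is irrational), one gets
\equ{z_i=\lceil i\gb\rceil\ga-i=\ga\big(1-\{i\gb\}\big)=\ga\{-i\gb\}\in(0,\ga).}
Thus a record minimum of $z_i$ is exactly a record maximum of $\{i\gb\}$, i.e.\ a point of the orbit $\{-i\gb\}$ that approaches $0$ from above more closely than all its predecessors; in particular the $z_{k_j}$ are strictly decreasing by construction. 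For part (1) I would invoke Weyl's Criterion (Theorem~\ref{theorem:WeylEquiCrit}): equidistribution of $\{i\gb\}$ gives, for each $\gep>0$, infinitely many $i$ with $\{i\gb\}>1-\gep$, hence with $z_i<\ga\gep$. Such $i$ drive the running minimum below $\ga\gep$, so the decreasing sequence $z_{k_j}$ satisfies $\us{j\lra\infty}{lim}z_{k_j}\leq\ga\gep$ for every $\gep$, whence $z_{k_j}\searrow 0$.

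The core is parts (2) and (3), for which the key tool is the addition law: since $\frac{z_i}{\ga}=\{-i\gb\}$ and fractional parts add modulo $1$, for all $a,b$
\equ{z_{a+b}=z_a+z_b \text{ if } z_a+z_b<\ga,\qquad z_{a+b}=z_a+z_b-\ga \text{ if } z_a+z_b>\ga.}
Let $k_{j-1},k_j$ be consecutive records, $d=k_j-k_{j-1}$ and $\gd=z_{k_{j-1}}-z_{k_j}>0$. Applying the addition law to $z_{k_j}=z_{k_{j-1}+d}$ forces the wrap-around case (as $z_{k_j}<z_{k_{j-1}}$ and $z_d>0$), giving $z_d=\ga-\gd$. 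Feeding this back, $z_{k_j+d}=z_{k_j}+z_d\pmod{\ga}$ splits into exactly the two cases mirroring the proof of Theorem~\ref{theorem:IncreasingGaps}: if $z_{k_j}>\gd$ then $z_{k_j+d}=z_{k_j}-\gd$ is a smaller value which, granting the minimality claim below, is the next record, so the gap repeats, $k_{j+1}-k_j=d$; if $z_{k_j}<\gd$ then $z_{k_j+d}$ lands near $\ga$ and is not a record, so the next record occurs strictly later, $k_{j+1}-k_j>d$. Either way the index gaps are non-decreasing, which is part (2).

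For part (3), note that along any run of equal gaps the values decrease arithmetically, $z_{k_j+md}=z_{k_j}-m\gd$, so positivity of $z$ forces every such run to be finite and a strict increase of the gap to occur infinitely often; since the gaps are positive integers that are non-decreasing and strictly increase infinitely often, they tend to $\infty$. (Alternatively part (3) follows from part (2) together with part (1) exactly as in Theorem~\ref{theorem:Gaps}: bounded gaps would eventually make the $z_{k_j}$ an arithmetic progression, which contradicts the equidistribution of $\{i\gb\}$.)

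The step I expect to be the genuine obstacle, precisely the analogue of the \emph{subtle residue-class argument} in Theorem~\ref{theorem:IncreasingGaps}, is the minimality claim: in the repeating case no intermediate index $k_j<i<k_j+d$ yields a record, i.e.\ $z_{k_j}+z_t<\ga$ for all $0<t<d$. This is not formal from the addition law alone, since whenever $z_{k_j}+z_t>\ga$ the wrap-around would manufacture a spurious smaller value; one must rule this out by controlling the complementary (``other side'') approximations, namely the indices $t<d$ for which $z_t$ is close to $\ga$. I would handle this by tracking, in parallel with the $k_j$, the record minima of $\ga-z_i$ (the closest approaches from below) and showing that the two interleaved families obey the mutual recurrence $d_{\text{new}}=d_{\text{old}}+(\text{current other-side record gap})$; this simultaneously pins down the intermediate values and re-derives the dichotomy above. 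Once this bookkeeping is in place, (1)--(3) follow as described, and the records $\frac{k_j}{x_2(k_j)}\lra\ga$ from below furnish the sequence of rationals used subsequently.
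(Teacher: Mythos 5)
Your reduction to the rotation orbit ($z_i=\ga\{-i\gb\}$ with $\gb=1/\ga$) and your equidistribution argument for part (1) are correct and essentially match the paper, which writes $z_i=y_i\ga$ with $y_i=1-\{i\gb\}$ and uses density of the fractional parts. The problem is that for parts (2) and (3) you have not actually proved the one statement that carries all the content. Your dichotomy needs, in \emph{both} of its branches, that no index $i$ with $k_j<i<k_j+d$ is a record; by your own addition law this is exactly the claim that $z_t\leq \ga-z_{k_j}$ for all $0<t<d$, i.e.\ that the closest approach of the orbit to $\ga$ from below over the first $d-1$ steps is controlled by the current record $z_{k_j}$. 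You correctly identify this as ``the genuine obstacle'' and then only sketch a plan (tracking the complementary records of $\ga-z_i$ and a mutual recurrence between the two interleaved families). That plan is the right one --- it is the two-sided best-approximation / three-distance structure --- but as written it is asserted rather than proved, so part (2) is open, and both of your arguments for part (3) (the ``finite runs'' argument and the Weyl argument) presuppose part (2). This is a genuine gap, not a cosmetic one: without the minimality claim the gap sequence $k_{j+1}-k_j$ could a priori decrease.

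For comparison, the paper discharges exactly this point by a different route: it observes that the records give rationals $k_j/x_2(k_j)\lra\ga$, feeds these into Theorem~\ref{theorem:StabInvariance} and Theorem~\ref{theorem:Gaps}, and thereby inherits the monotonicity from the residue-class argument of Theorem~\ref{theorem:IncreasingGaps}, which proves the analogous minimality statement in the rational (mod $q$) setting where the orbit is finite and the bookkeeping is explicit. If you want to keep your direct dynamical argument, the cleanest repair is to prove the interleaving claim by induction on consecutive record pairs (or to invoke the three-distance theorem outright); note that your ``finite runs'' derivation of (3) from (2) is actually tidier than the equidistribution argument the paper uses in Theorem~\ref{theorem:Gaps}, so completing the argument would be worthwhile.
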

\begin{proof}
First we define a sequence of number parts $0<y_i<1$ defined by the equation 
\equ{y_i+\frac{i}{\ga}=\lceil\frac{i}{\ga}\rceil=x_2(i).}
Define a subsequence with the property that 
\equ{y_{k_j}<y_{k_{j-1}}=min\{y_1,y_2,\ldots,y_{k_j-1}\}.} Since the number parts of
$\{\frac{i}{\ga}\mid i \in \mbb{N}\}$ is also dense in $[0,1]$ we have that $y_{k_j}\searrow 0$.

We also have for every $i$, $z_i=y_i\ga$. So $z_{k_j}$ also satisfies the property that
\equ{z_{k_j}<z_{k_{j-1}}=min\{z_1,z_2,\ldots,z_{k_j-1}\}.}
Now we have $x_2(k_j)=\frac{k_j}{\ga}+y_{k_j}$ and $y_{k_j} \searrow 0$. Since $y_{k_j}\ga<1$ 
then
\equ{\lfloor x_2(k_j)\ga\rfloor = k_j.}
Now we apply the previous Theorems~\ref{theorem:StabInvariance},~\ref{theorem:Gaps} as follows. 
The sequence
\equ{\frac{k_j}{x_2(k_j)}=\ga-\frac{z_{k_j}}{x_2(k_j)} \lra \ga\text{ as }j \lra \infty}
In Theorems~\ref{theorem:StabInvariance},~\ref{theorem:Gaps} we choose $\ga$, which is an 
irrational satisfying the property that
$0 < \ga <1$ and the sequence of rationals $\frac{k_j}{x_2(k_j)}=\frac{p_j}{q_j} \lra \ga$ as 
$j \lra \infty,$ where $gcd(p_j,q_j)=1$.
Now by the very definition of $z_{k_j}$ and using the properties of stabilization and eventual 
invariance we have
\begin{itemize}
\item $x_2(k_j)-x_2(k_{j-1})$ is increasing.
\item $\us{j \lra \infty}{lim}(x_2(k_j)-x_2(k_{j-1}))=\infty$.
\end{itemize}
This implies we also have
\begin{itemize}
\item $k_j-k_{j-1}$ is increasing.
\item $\us{j \lra \infty}{lim}(k_j-k_{j-1})=\infty$.
\end{itemize}
This proves Lemma~\ref{lemma:IncreasingLemma}.
\end{proof}

Now we prove the following Lemma~\ref{lemma:EmptyLemma}
\begin{lemma}
\label{lemma:EmptyLemma}
Let $p_1<p_2$ be two integers such that $(p_1,p_2)$ is an irrational pair. Using the notations 
of the previous Lemma~\ref{lemma:IncreasingLemma}, we have for any integer $0 \leq t < 
k_{j+1}-k_j$ there are no numbers of the form $p_1^bp_2^a$ in the integer interval excluding
the end-points.
\equ{(p_2^{k_j+t},\ldots,p_1^{x_2(k_j)}p_2^t).}
\end{lemma}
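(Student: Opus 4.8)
The plan is to pass to logarithms base $p_2$. Writing $p_1=p_2^{\ga}$ with $\ga=Log_{p_2}(p_1)$, a typical element $p_1^bp_2^a$ of the multiplicatively closed set (with integers $a,b\geq 0$) becomes $p_2^{b\ga+a}$, the left endpoint is $p_2^{k_j+t}$, and, since $x_2(k_j)\ga=k_j+z_{k_j}$, the right endpoint is $p_1^{x_2(k_j)}p_2^t=p_2^{k_j+z_{k_j}+t}$. Thus $p_1^bp_2^a$ lies strictly inside the interval exactly when
\equ{k_j+t < b\ga+a < k_j+t+z_{k_j}.}
Because the previous lemma gives $z_{k_j}=x_2(k_j)\ga-k_j\in(0,1)$, this is an interval of length $z_{k_j}<1$ whose left endpoint is the integer $k_j+t$; it can be entered only when the integer part is pinned, i.e. $\lfloor b\ga\rfloor+a=k_j+t$ together with $\{b\ga\}\in(0,z_{k_j})$. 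So the lemma reduces to showing that no integer $b\geq 0$ can have $\{b\ga\}$ in the open interval $(0,z_{k_j})$ while simultaneously $a=k_j+t-\lfloor b\ga\rfloor\geq 0$.

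The next step is to exploit the record-minimum structure underlying the $k_j$. First I would note that $z_i=\{x_2(i)\ga\}$ for every $i$, since $z_i\in[0,\ga)\subset[0,1)$ forces $\lfloor x_2(i)\ga\rfloor=i$, and that $x_2$ is strictly increasing (as $i\mapsto i/\ga$ increases by more than $1$). Hence the defining property of the subsequence, namely that $z_{k_j}=\min\{z_1,\ldots,z_{k_{j+1}-1}\}$, says precisely that $x_2(k_j)$ is a record minimum of $\{B\ga\}$ among integers $B$; the integers $B$ omitted from the image of $x_2$ satisfy $\{B\ga\}\geq\ga>z_{k_j}$ and so create no new records. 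Consequently, for every integer $b$ with $1\leq b<x_2(k_{j+1})$ one has $\{b\ga\}\geq z_{k_j}$, while $b=0$ gives $\{b\ga\}=0$; in either case $\{b\ga\}\notin(0,z_{k_j})$. Therefore $\{b\ga\}\in(0,z_{k_j})$ forces $b\geq x_2(k_{j+1})$.

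To close the argument I would combine this with the hypothesis $t<k_{j+1}-k_j$, which is where it becomes essential. Suppose $p_1^bp_2^a$ lay strictly inside. By the reduction $\{b\ga\}\in(0,z_{k_j})$, so the record-minimum step gives $b\geq x_2(k_{j+1})$, whence $b\ga\geq x_2(k_{j+1})\ga=k_{j+1}+z_{k_{j+1}}>k_{j+1}$ and therefore $\lfloor b\ga\rfloor\geq k_{j+1}$. On the other hand $a\geq 0$ yields $\lfloor b\ga\rfloor=k_j+t-a\leq k_j+t<k_{j+1}$, the final inequality being exactly $t<k_{j+1}-k_j$. This contradiction shows no interior point exists, proving the lemma; the two excluded endpoints are recovered by $b=0,\ a=k_j+t$ and by $b=x_2(k_j),\ a=t$, where $\{b\ga\}$ equals $0$ and $z_{k_j}$ respectively and so sits on the boundary.

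I expect the main obstacle to be making the record-minimum identification airtight: carefully justifying that the running minima of the $z_i$ coincide with the best one-sided approximations of $0$ from above by the fractional parts $\{B\ga\}$, and that the omitted values $B$ genuinely contribute nothing because $\{B\ga\}\geq\ga$. Once that is established, the role of the hypothesis $t<k_{j+1}-k_j$ is transparent — it is exactly the margin needed to keep $\lfloor b\ga\rfloor$ below $k_{j+1}$ — and the remainder is routine bookkeeping with floors and the identity $x_2(i)\ga=i+z_i$.
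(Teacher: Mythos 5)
Your proof is correct and follows essentially the same route as the paper's: both reduce to the inequality $k_j+t<a+b\ga<k_j+t+z_{k_j}$ in logarithms base $p_2$ and then invoke the record-minimum property of the $z_{k_j}$ to show that any $b$ with $\{b\ga\}\in(0,z_{k_j})$ must satisfy $b\geq x_2(k_{j+1})$, which is incompatible with $\lfloor b\ga\rfloor=k_j+t-a\leq k_j+t<k_{j+1}$. Your write-up is in fact somewhat more explicit than the paper's at the one delicate point (why integers $B$ outside the image of $x_2$ contribute nothing, so that the running minima of the $z_i$ really are the best one-sided approximations), which the paper passes over with the terse assertion $z\geq z_{k_j}$.
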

\begin{proof}
Let $\ga=Log_{p_2}(p_1)<1$. Here we use the following fact. We have 
$\lfloor x_2(k_j)\ga\rfloor =k_j$.
Suppose if there exists such a number $p_2^{k_j+t}< p_1^bp_2^a< p_1^{x_2(k_j)}p_2^t$ then
we have \equa{&k_j+t < a+b\ga < t + x_2(k_j)\ga < t+k_j+1\\
\ra &k_j<-t+a+b\ga<x_2(k_j)\ga<k_j+1\\
\ra &k_j+t-a<b\ga<k_j+t-a+1.}

So we have that $b \neq 0$. Similarly $b \neq x_2(k_j)$. If $b=x_2(k_j)$ then we get that 
$k_j=k_j+t-a$, which implies $t=a$. Hence $p_1^bp_2^a$ is an end-point, which is not considered.

Let $b\ga=k_j+t-a+z$. Consider the case $k_j+t-a<k_{j+1}$. Then by definition of 
$z_{k_j},z_{k_{j+1}}$ and since $b \neq x_2(k_j)$ we have $z\geq z_{k_j}>z_{k_{j+1}}$. Hence
\equ{k_j<k_j+z=-t+a+b\ga<x_2(k_j)\ga=k_j+z_{k_j}<k_j+1.}
Hence we get $z<z_{k_j}$, which is a contradiction. Hence we must
have $k_j+t-a\geq k_{j+1}$, which implies $t\geq k_{j+1}-k_j+a \geq k_{j+1}-k_j$, which is again
a contradiction to the hypothesis $0 \leq t < k_{j+1}-k_j$. This proves 
Lemma~\ref{lemma:EmptyLemma}.
\end{proof}

Using these three 
Lemmas~\ref{lemma:Irrational},~\ref{lemma:IncreasingLemma},~\ref{lemma:EmptyLemma}
we prove our main Theorem~\ref{theorem:GapsDoublyGenerated} of this article and its 
Corollary~\ref{cor:GapsDoublyGenerated}.
\begin{proof}
Suppose $\mbb{S}=\{1,f,f^2,\ldots\}$ a singly generated multiplicatively closed set then we 
immediately have $\us{j \lra \infty}{lim} (f^{j+1}-f^j)=\infty$.

Now suppose $\mbb{S}=\{g_1^ig_2^j\mid i,j \geq 0\}$ and $Log_{g_1}(g_2)$ is rational then 
$\mbb{S}$ is contained in a singly generated
multiplicatively closed set $\mbb{T}$ using Theorem~\ref{theorem:Equivalence}. So there exists 
arbitrarily large gaps in $\mbb{S}$ as well.

Now suppose $\mbb{S}=\{p_1^ip_2^j\mid i,j\geq 0\}$ and $Log_{p_2}(p_1)$ is irrational. Then in 
Lemma~\ref{lemma:EmptyLemma}
we substitute $t=k_{j+1}-k_j-1$ and we obtain a gap of size
\equ{0< p_1^{x_2(k_j)}p_2^t-p_2^{k_j+t}=p_2^{t}(p_1^{x_2(k_j)}-p_2^{k_j})\geq p_2^t=
p_2^{k_{j+1}-k_j-1}.}
Hence the limit superior of the gaps tend to infinity in the multiplicatively closed set 
$\mbb{S}$ using Lemma~\ref{lemma:IncreasingLemma}.
Now Theorem~\ref{theorem:GapsDoublyGenerated} follows.
\end{proof}
\begin{note}
Via the sequence $k_j$ we know the prime factorization of the end points of the intervals 
$(p_2^{k_j+t},p_1^{x_2(k_j)}p_2^t)$ for $0<t<k_{j+1}-k_j$,
which are all gap intervals.
\end{note}
To prove Corollary~\ref{cor:GapsDoublyGenerated} we can use 
Theorem~\ref{theorem:GapsDoublyGenerated} by observing that using Lemma~\ref{lemma:Irrational}
the pair $(p_1,p_2)$ is an irrational pair if both $p_1,p_2$ are primes, which also implies that 
both $Log_{p_1}(p_2),Log_{p_2}(p_1)$ are irrational.

Here we give an example illustrating the ideas used to prove 
Theorem~\ref{theorem:GapsDoublyGenerated}.
\begin{example}[Main example]
\label{example:Main}
Consider the irrational $\frac{1}{Log_2(3)}$. The first few terms of the sequence $k_j$, which 
is defined by the fractional parts
\equ{z_{k_j}<z_{k_{j-1}}=min\{z_1,z_2,\ldots,z_{k_j-1}\}}
is given by
\equa{&\{1,3,5,17,29,41,94,147,200,253,306,971,1636,2301,2966,3631,4296,\\
&4961,5626,6291,6956,7621,8286,8951,9616,10281,10946,11611,12276,\\
&12941,13606,14271,14936,15601,47468,79335,190537\}.}
The corresponding first few terms of the sequence $x_2(k_j)$ is given by
\equa{&\{2,5,8,27,46,65,149,233,317,401,485,1539,2593,3647,4701,5755,6809,\\
&7863,8917,9971,11025,12079,13133,14187,15241,16295,17349,18403,\\
&19457,20511,21565,22619,23673,24727,75235,125743,301994\}.}
The first few terms of the rational approximation seqence to $\ga$ is given by
\equa{&\{\frac{1}{2},\frac{3}{5},\frac{5}{8},\frac{17}{27},\frac{29}{46},\frac{41}{65},
\frac{94}{149}, \frac{147}{233},\frac{200}{317},\frac{253}{401},\frac{306}{485},
\frac{971}{1539},\frac{1636}{2593}, \frac{2301}{3647},\frac{2966}{4701},\\
&\frac{3631}{5755},\frac{4296}{6809},\frac{4961}{7863},
\frac{5626}{8917},\frac{6291}{9971},\frac{6956}{11025},\frac{7621}{12079},\frac{8286}{13133},
\frac{8951}{14187},\frac{9616}{15241},\\
&\frac{10281}{16295},\frac{10946}{17349},
\frac{11611}{18403},\frac{12276}{19457}, \frac{12941}{20511},\frac{13606}{21565},
\frac{14271}{22619},\frac{14936}{23673},
\frac{15601}{24727},\\
&\frac{47468}{75235},
\frac{79335}{125743},\frac{190537}{301994}\}.}
This seqence for approximate inverses for the fraction $\frac{190537}{301994}$ is 
given by
\equa{&\{1,2,5,8,27,46,65,149,233,317,401,485,1539,2593,3647,4701,5755,\\
&6809,7863,8917,9971,11025,12079,13133,14187,15241,16295,17349,\\
&18403,19457,20511,21565,22619,23673,24727,75235,125743,301994\}.}
We note that it matches with $x_2(k_j)$. Actually this can be obtained for any suitable 
rational approximation sequence for $\ga$. The first few gaps of intervals with the prime 
factorization of end-points of the gap intervals of the form 
\equ{(p_2^{k_{j+1}-1}\ldots p_1^{x_2(k_j)}p_2^{k_{j+1}-k_j-1})}
using this method is given by
\equa{&(3^2\ldots 2^23^1),(3^4\ldots 2^53^1),(3^{16}\ldots 2^83^{11}),
(3^{28}\ldots 2^{27}3^{11}),(3^{40}\ldots 2^{46}3^{11}),\\
&(3^{93}\ldots 2^{65}3^{52}),(3^{146}\ldots 2^{149}3^{52}),(3^{199}\ldots 2^{233}3^{52}),
(3^{252}\ldots 2^{317}3^{52}),\\
&(3^{305}\ldots 2^{401}3^{52}),(3^{970}\ldots 2^{485}3^{664}),(3^{1635}\ldots 2^{1539}3^{664}),\\
&(3^{2300}\ldots 2^{2593}3^{664}),
(3^{2965}\ldots 2^{3647}3^{664}),(3^{3630}\ldots 2^{4701}3^{664}),\\
&(3^{4295}\ldots 2^{5755}3^{664}),(3^{4960}\ldots 2^{6809}3^{664}),
(3^{5625}\ldots 2^{7863}3^{664}),\\
&(3^{6290}\ldots 2^{8917}3^{664}),
(3^{6955}\ldots 2^{9971}3^{664}),
(3^{7620}\ldots 2^{11025}3^{664}),\\
&(3^{8285}\ldots 2^{12079}3^{664}),
(3^{8950}\ldots 2^{13133}3^{664}),
(3^{9615}\ldots 2^{14187}3^{664}),\\
&(3^{10280}\ldots 2^{15241}3^{664}),
(3^{10945}\ldots 2^{16295}3^{664}),
(3^{11610}\ldots 2^{17349}3^{664}),\\
&(3^{12275}\ldots 2^{18403}3^{664}),
(3^{12940}\ldots 2^{19457}3^{664}),
(3^{13605}\ldots 2^{20511}3^{664}),\\
&(3^{14270}\ldots 2^{21565}3^{664}),
(3^{14935}\ldots 2^{22619}3^{664}),
(3^{15600}\ldots 2^{23673}3^{664}),\\
&(3^{47467}\ldots 2^{24727}3^{31866}),
(3^{79334}\ldots 2^{75235}3^{31866}),
(3^{190536}\ldots 2^{125743}3^{111201}).}
\end{example}

\section{\bf{An open question}}
\label{sec:OQ}
In this section we state an open question arising from Theorem~\ref{theorem:GapsDoublyGenerated}.

\begin{ques}
\label{ques:Gaps}
Let $\mbb{S}=\{1<a_1<a_2<\ldots<\}\subs \mbb{N}$ be a finitely generated multiplicatively closed 
infinite set generated by positive integers $d_1,d_2,\ldots,d_n$. How do we construct explicitly 
arbitrarily large integer intervals with known prime factorization of the end points,
which do not contain any elements from the set $\mbb{S}$ using the positive integer 
$d_1,d_2,\ldots,d_n?$
\end{ques}
In the later sections we consider some implications of our results regarding this open
question and partially answer this question in the affirmative.

\section{\bf{On a generalization of this method to more than two generators}}
\label{sec:GMTG}
In the proof of the main Theorem~\ref{theorem:GapsDoublyGenerated} we know the
prime factorizations of both the end points of the gap interval via the stabilization sequence.
Sometimes knowing factorizations is helpful because of the following note.
\begin{note}
If a large number $N$ has exactly has two large prime factor pair say $\{q_1,q_2\}$ and 
if $N$ lies in a gap interval of multiplicatively closed set generated by $p_1,p_2$ then 
we can postively conclude that the factor pair of $N,\{q_1,q_2\} \neq \{p_1,p_2\}$. 
The gap intervals in Theorem~\ref{theorem:GapsDoublyGenerated} are easy to generate
for any pair $\{p_1,p_2\}$ such that $Log_{p_1}(p_2)$ is irrational.
\end{note}

In this section we point out that a certain
generalization of this method of proof of Theorem~\ref{theorem:GapsDoublyGenerated} 
to more than two generators is not directly feasible.
In particular in an attempt to answer Question~\ref{ques:Gaps} we prove a lemma, which says 
that the same technique may or may not be extendable for more than two generators.
\begin{lemma}
\label{lemma:235}
Let $G=\{p_1<p_2<\ldots<p_l\}$ be a finite set of primes. Let $k$ be any positive integer. 
Consider the monoid
$T=\{\us{i=1}{\os{l-1}{\sum}} x_iLog_{p_l}p_i \mid x_i \in \mbb{N}\cup \{0\}\}$. Consider the set
$T_k=T \cap (k,k+1)$. Let $z_k = min(T_k-k)$. Let $z_{k_j}$ be a monotone decreasing sequence 
converging to zero constructed from $z_k$ defined by the property that
\equ{z_{k_j}<z_{k_{j-1}}=min\{z_1,z_2,\ldots,z_{k_j-1}\}}
then the sequence of integers $\{k_{j+1}-k_j: j \in \mbb{N}\}$ need not be increasing.
\end{lemma}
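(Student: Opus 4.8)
The plan is to read the statement as a \emph{negative} result: since the conclusion is only that the gap sequence $\{k_{j+1}-k_j\}$ \emph{need not} be increasing, it suffices to exhibit one explicit choice of primes for which monotonicity fails. The label of the lemma already names the witness, namely $l=3$ with $G=\{2,3,5\}$, so that $p_l=5$ and the monoid becomes
\[
T=\{x_1 Log_5 2 + x_2 Log_5 3 \mid x_1,x_2 \in \mbb{N}\cup\{0\}\}.
\]
First I would record the two structural facts that make this a legitimate instance of the set-up. Both $Log_5 2$ and $Log_5 3$ are irrational by Lemma~\ref{lemma:Irrational} (or directly by unique factorisation), and they are $\mbb{Q}$-linearly independent: clearing denominators in a relation $aLog_5 2+bLog_5 3=0$ reduces to integers, whence $2^a3^b=1$ and therefore $a=b=0$. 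Consequently $T$ is dense modulo $1$, every $T_k=T\cap(k,k+1)$ is nonempty, and $z_k=\min(T_k-k)$ is well defined for each $k$.

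The evaluation of $z_k$ is a \emph{finite} optimisation for each fixed $k$: since both generators are positive, any pair contributing to $T_k$ satisfies $x_1\le (k+1)/Log_5 2$ and $x_2\le (k+1)/Log_5 3$, so only finitely many lattice points must be scanned. I would therefore tabulate $z_k$ for $k=0,1,2,\dots$ up to a cutoff $K$ by minimising the fractional excess $x_1 Log_5 2 + x_2 Log_5 3 - k$ over this explicit box; extract the record-minimum subsequence by retaining $k_j$ exactly when $z_{k_j}<\min\{z_1,\dots,z_{k_j-1}\}$, which is precisely the defining property in the statement; and then list the successive differences $k_{j+1}-k_j$ and exhibit one index at which the later difference is not larger than the earlier one. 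A single such descent proves the lemma.

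To motivate why a descent should appear, I would contrast this with the two-generator situation. There the relevant fractional excesses are driven by a \emph{single} irrational $\ga$, and the record minima form a best-approximation ladder whose gaps increase by the stabilisation and equidistribution arguments of Theorems~\ref{theorem:StabInvariance} and~\ref{theorem:Gaps} (in effect the three-distance phenomenon for $\{n\ga\}$, which also underlies Lemma~\ref{lemma:IncreasingLemma}). With two rationally independent generators the record minima of $z_k$ are instead governed by a rank-two lattice, for which no single continued-fraction expansion organises the approximations: the closest approaches to $0$ from above by elements of $T-\mbb{Z}$ can be produced by genuinely different lattice directions at different scales, so there is no mechanism forcing the gaps to grow.

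The main obstacle is the certification step. Confirming that a candidate $z_{k_j}$ is a true record minimum requires controlling \emph{all} of $z_1,\dots,z_{k_j-1}$ simultaneously, not merely the previously selected record values, and record minima are by definition a global property of the whole initial segment. I expect to defuse this by the finiteness bound above: each $z_k$ is an honest minimum over an explicit finite box, so the verification is a terminating search up to the cutoff $K$, and the counterexample is fully certified once $K$ is taken past the first descent in $\{k_{j+1}-k_j\}$.
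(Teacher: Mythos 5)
Your proposal is correct and follows essentially the same route as the paper: the paper's proof also takes $G=\{2<3<5\}$, tabulates $z_k$ by scanning $Log_5(2^i3^j)$ over a finite box ($0\leq i,j\leq 50$), extracts the record minima $k_0=0,\,k_1=1,\,k_2=2,\,k_3=3,\,k_4=7,\,k_5=8,\,k_6=13,\,k_7=14$, and observes that the successive differences $1,1,1,4,1,5,1$ fail to be increasing. Your additional remarks on certification via the finite search box and on why the single-irrational three-distance mechanism breaks down in rank two are consistent with (and slightly more explicit than) what the paper records.
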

\begin{proof}
Consider the following example. Let $\{p_1=2<p_2=3<p_3=5\}$. By calculating the logarithm of 
numbers to the base $5$ in the sequence $\{2^i3^j\mid 0 \leq i,j \leq 50\}$ or by actually 
showing inequalities we obtain
\begin{itemize}
\item $k_0=0,z_{k_0}=z_0=Log_5(2)-0$.
\item $k_1=1,z_{k_1}=z_1=Log_5(2.3)-1$.
\item $k_2=2,z_{k_2}=z_2=Log_5(3^3)-2$.
\item $k_3=3,z_{k_3}=z_3=Log_5(2^7)-3$.
\item $k_4=7,z_{k_4}=z_{7}=Log_5(2^2.3^9)-7$.
\item $k_5=8,z_{k_5}=z_{8}=Log_5(2^{17}.3)-8$.
\item $k_6=13,z_{k_6}=z_{13}=Log_5(2^8.3^{14})-13$.
\item $k_7=14,z_{k_7}=z_{14}=Log_5(2^{23}.3^{6})-14$.
\end{itemize}
We can show the inequalities $z_{k_0}>z_{k_1}>z_{k_2}>z_{k_3}>z_{k_4}>z_{k_5}>z_{k_6}>z_{k_7}$
and \equa{k_1-k_0=k_2-k_1=k_3-k_2=1<k_4-k_3=4>k_5-k_4&=1<\\
k_6-k_5&=5>k_7-k_6=1,} 
which is not increasing. This proves the lemma. However we mention that it is possible that 
$\us{j \lra \infty}{limsup}(k_{j+1}-k_j)=\infty$, which additionally requires a 
proof.
\end{proof}
\section{\bf{Geometry of singly and doubly generated multiplicatively closed sets}}
\label{sec:GeometryMultClosed}
In this section we partially answer Question~\ref{ques:Gaps} using 
Theorems~\ref{theorem:DoubleGen},\ref{theorem:LineClassification} in Theorem
~\ref{theorem:MultiplyGeneratedDoublyClosedLine}. 
First we begin with a few definitions.
\begin{defn}
\label{defn:ProjSpaceMult}
Let $\mbb{Q}$ denote the field of rational numbers. Let $\mbb{Q}_{\geq 0}$ denote the set of 
non-negative rationals.
Define an equivalence relation $\sim_R$ on 
\equ{\mbb{Q}_{\geq 0}^{\infty}\bs \{0\}=\us{i=1}
{\os{\infty}{\bigoplus}}\mbb{Q}_{\geq 0}\bs \{0\}.}
We say $(a_1,a_2,\ldots,)\sim_R (b_1,b_2,\ldots,) \in \mbb{Q}_{\geq 0}^{\infty}\bs \{0\}$ if 
there exists $\gl\in \mbb{Q}^{+}$ such that $a_i=\gl b_i$ for all $i \geq 1$.
Let $\mbb{PF}^{\infty}_{\mbb{Q}_{\geq 0}}$ denote the projective space 
\equ{\mbb{PF}^{\infty}_{\mbb{Q}_{\geq 0}}=\frac{\mbb{Q}_{\geq 0}^{\infty}\bs \{0\}}{\sim_R}.}
\end{defn}
\begin{defn}
Let $\mbb{Q}$ denote the field of rational numbers. Define an equivalence relation on
\equ{\us{i\geq 1}{\bigoplus}\mbb{Q}\bs\{0\}.}
We say $(a_1,a_2,\ldots,a_n)\sim_R(b_1,b_2,\ldots,b_n)$ if $a_i=\gl b_i$ for some
$\gl\in \mbb{Q}^{*}$. Let $\mbb{PF}^{\infty}_{\mbb{Q}}$ denote the space 
\equ{\mbb{PF}^{\infty}_{\mbb{Q}}=\frac{\us{i \geq 1}{\bigoplus}\mbb{Q} \bs\{0\}}{\sim_R}.}

The space $\mbb{PF}^{\infty}_{\mbb{Q}_{\geq 0}} \subs \mbb{PF}^{\infty}_{\mbb{Q}}$ as the subset 
of points, which have all non-negative and at least one positive integer representatives. 
We note that if two finite tuples, which have positive coordinates are rational multiple of each 
other then they are positive rational multiple of each other.
\end{defn}
\begin{defn}
\label{defn:MaxMult}
Let $\mbb{P}=\{p_1=2,p_2=3,p_3=5,\ldots,\}\subs \mbb{N}$ be the set of primes, where $p_i$ 
denote the $i^{th}-$prime. We say a set $\mbb{S}\subs \mbb{N}$ is singly generated
multiplicatively closed if $\mbb{S}=\{1,f,f^2,\ldots,\}$ for some $f \in \mbb{N},f \neq 1$. 
We say $\mbb{S}$ is a singly generated maximal multiplicatively closed set if $\mbb{T}$ is any 
singly generated multiplicatively closed set and $\mbb{T} \sups \mbb{S}$ then $\mbb{T}=\mbb{S}$.
\end{defn}

\begin{defn}
\label{defn:DoublyMultClosedLine}
Let $L$ be a line obtained by joining two points 
$P_1,P_2 \in \mbb{PF}^{\infty}_{\mbb{Q}_{\geq 0}} \subs \mbb{PF}^{\infty}_{\mbb{Q}}$.
We say $L$ is a doubly multiplicatively closed line, if we consider only integers and 
(not elements of $\mbb{Q}_{\geq 0}\bs\Z_{\geq 0}$) associated to all tuples whose equivalence
classes are points that lie on $L$ (refer to the proof of Theorem~\ref{theorem:Bijection}) then it
gives rise to a doubly generated multiplicatively closed set. 

In view of Example~\ref{example:LineNotMultClosed} not all lines $L$ are
doubly multiplicatively closed lines. However we note that each point 
$P\in \mbb{PF}^{\infty}_{\mbb{Q}_{\geq 0}}$ gives rise to a unique maximal singly generated 
multiplicatively closed set (See Theorem~\ref{theorem:Bijection}).
\end{defn}

Now we state a correspondence theorem.
\begin{theorem}
\label{theorem:Bijection}
Let \equa{\mcl{S}=\{\mbb{S}\subs \mbb{N}\mid& \text{ such that }\mbb{S} 
\text{ is a maximal singly generated multiplicatively}\\
&\text{closed set.}\}}
Then there is a bijective correspondence between
\equ{\mcl{S} \llra \mbb{PF}^{\infty}_{\mbb{Q}_{\geq 0}}}
i.e. between maximal singly generated multiplicatively closed sets and the points of the space 
$\mbb{PF}^{\infty}_{\mbb{Q}_{\geq 0}}$
given by 
\equa{\mbb{S}&=\{f^n\mid 0 \leq n\in \mbb{N}\cup\{0\},\\
f&=\us{j=1}{\os{k}{\prod}}p_{i_j}^{r_{i_j}} 
\text{ with } p_{i_1}<p_{i_2}<\ldots<p_{i_k} \in \mbb{P},r_{i_1},\ldots,r_{i_k}\in \mbb{N}\}\\
\lra P&=[\ldots:r_{i_1}:\ldots:r_{i_2}:\ldots:\ldots:\ldots:r_{i_k}:\ldots] \in 
\mbb{PF}^{\infty}_{\mbb{Q}_{\geq 0}}}
where the coordinates of any point in $\mbb{PF}^{\infty}_{\mbb{Q}_{\geq 0}}$ are ordered 
according to increasing sequence of primes in the set $\mbb{P}$.
\end{theorem}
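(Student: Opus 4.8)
The plan is to build the two maps explicitly and show they are mutually inverse, with the entire correspondence resting on unique factorization together with a single normalization step: the projective scaling $\sim_R$ on the geometric side must be matched to \emph{primitivity} of the generator on the arithmetic side. First I would pin down what maximality means for a singly generated set. Writing the generator as $f=\prod_i p_i^{r_i}$ with $r_i\in\N\cup\{0\}$ and only finitely many nonzero, I claim that $\mbb{S}=\{f^n:n\geq 0\}$ is maximal exactly when $\gcd_i(r_i)=1$, i.e.\ when $f$ is not a proper perfect power. Indeed, if $\mbb{T}=\{g^n:n\geq 0\}\spq\mbb{S}$, then $f\in\mbb{T}$ forces $f=g^m$ for some $m\geq 1$; if the containment is proper then $g\nin\mbb{S}$, which rules out $m=1$, so $m\geq 2$ and $f$ is a proper power. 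Conversely, if $f=g^m$ with $m\geq 2$ and $g\neq 1$, then $g\nin\mbb{S}$ (a relation $g=f^j$ would give $mj=1$), so $\mbb{S}\sbnq\{g^n:n\geq 0\}$ is not maximal. Since $f=g^m$ with $g=\prod_i p_i^{s_i}$ amounts to $r_i=m\,s_i$ for all $i$, being a proper power is the same as $\gcd_i(r_i)\geq 2$; hence maximality $\Lra\gcd_i(r_i)=1$.

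Next I would verify that both maps are well-defined. For the forward map, each maximal $\mbb{S}$ has a unique generator $f\neq 1$ (the least element exceeding $1$), so its exponent tuple $(r_i)$, and hence the projective point $P=[r_1:r_2:\cdots]$, is unambiguous. For the backward map, given $P\in\mbb{PF}^{\infty}_{\mbb{Q}_{\geq 0}}$ I would pick any nonnegative representative, clear denominators to land in $\N\cup\{0\}$, and divide by the common gcd to obtain the unique primitive nonnegative integer tuple $(r_i)$ with $\gcd_i(r_i)=1$ and at least one positive entry; then set $f=\prod_i p_i^{r_i}$ and $\mbb{S}=\{f^n:n\geq 0\}$, which is maximal by the characterization above. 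This uses that the direct sum forces finite support, so $f$ is a genuine natural number.

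For injectivity, suppose two maximal sets yield the same point $P$. Their primitive exponent tuples satisfy $(r_i)=\gl\,(s_i)$ for some $\gl\in\mbb{Q}^{+}$; writing $\gl=a/b$ in lowest terms, the relations $b\,r_i=a\,s_i$ together with $\gcd_i(r_i)=1=\gcd_i(s_i)$ force $b\mid\gcd_i(s_i)=1$ and $a\mid\gcd_i(r_i)=1$, so $a=b=1$ and the tuples coincide; therefore $f_1=f_2$ and $\mbb{S}_1=\mbb{S}_2$. Surjectivity is exactly the backward construction of the previous paragraph, and the two assignments are visibly inverse to one another, so the correspondence $\mcl{S}\llra\mbb{PF}^{\infty}_{\mbb{Q}_{\geq 0}}$ is a bijection.

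The hard part is not any single computation but making the two normalizations compatible in an airtight way: one must check that $\sim_R$-equivalence (scaling by $\gl\in\mbb{Q}^{+}$) collapses \emph{precisely} onto equality of primitive integer representatives, and that this primitive representative is exactly the one singled out by the ``$f$ is not a proper power'' condition. Unique factorization is the hinge that converts the multiplicative statement ``$f$ is a proper power'' into the additive statement ``$\gcd$ of the exponents exceeds $1$,'' and it is the meshing of these two canonical forms --- projective on the geometry side, primitive on the arithmetic side --- that constitutes the genuine content of the theorem.
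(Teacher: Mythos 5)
Your proposal is correct and follows essentially the same route as the paper: associate to the generator its exponent tuple, observe that maximality corresponds to the exponents having gcd one, and use the unique primitive nonnegative integer representative of a projective point for the inverse map. You supply more detail than the paper does (notably the proof that maximality is equivalent to $f$ not being a proper power, and the explicit injectivity check), but the underlying argument is identical.
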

\begin{proof}
The bijection is given as follows. Let \equ{\mbb{S}=\{1,f,f^2,\ldots\}} be any singly generated 
multiplicatively closed set. Let
\equ{f=\us{j=1}{\os{k}{\prod}}p_{i_j}^{r_{i_j}} 
\text{ with } p_{i_1}<p_{i_2}<\ldots<p_{i_k} \in \mbb{P},r_{i_1},\ldots,r_{i_k}\in \mbb{N}.}
To this multiplicatively closed set we associate the point \equ{P=[\ldots:r_{i_1}:\ldots:
r_{i_2}:\ldots:\ldots:\ldots:r_{i_k}:\ldots] \in \mbb{PF}^{\infty}_{\mbb{Q}_{\geq 0}}.}
The condition that $\mbb{S}$ is maximal is equivalent to the condition
\equ{gcd(r_{i_1},r_{i_2},\ldots,r_{i_k})=1.} Also given any point $P$ in 
$\mbb{PF}^{\infty}_{\mbb{Q}_{\geq 0}}$
there is a unique non-negative integer coordinate representative of $P$ with $gcd$ of the 
coordinates equal to one, which gives rise to the integer $f \in \mbb{N}$ with $f \neq 1$.

This establishes the bijection and hence Theorem~\ref{theorem:Bijection} follows.
\end{proof}
\begin{theorem}[Log-Rationality]
\label{theorem:Equivalence}
Let $P_1,P_2$ be two points (possibly the same point) in $\mbb{PF}^{\infty}_{\mbb{Q}_{\geq 0}}$.
Let \equ{g_1=\us{j=1}{\os{t}{\prod}}p_{i_j}^{r_{i_j}},
g_2=\us{j=1}{\os{u}{\prod}}q_{i_j}^{s_{i_j}}}  
two positive integers $(>1)$ with their unique prime factorizations such that
\equa{P_1=[\ldots:r_{i_1}:\ldots:r_{i_2}:\ldots:\ldots:\ldots:r_{i_t}:\ldots]\\
P_2=[\ldots:s_{i_1}:\ldots:s_{i_2}:\ldots:\ldots:\ldots:s_{i_u}:\ldots].}
Let $f_1,f_2$ be the corresponding positive integers $(>1)$ under the bijection given in
Theorem~\ref{theorem:Bijection} then the following are equivalent.
\begin{enumerate}
\item (Log-Rationality:) $Log_{g_1}(g_2)$ is rational.
\item $P_1=P_2$
\item $f_1=f_2$.
\item The multiplicatively closed set $\mbb{T}=\{g_1^{i}g_2^{j}\mid i,j \geq 0\}$ is contained 
in a singly generated maximal multiplicatively closed set.
\end{enumerate}
\end{theorem}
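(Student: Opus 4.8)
The plan is to establish the cycle $(1)\Ra(2)\Ra(3)\Ra(4)\Ra(1)$, thereby obtaining the four-way equivalence. Throughout I would write $g_1=\us{j=1}{\os{t}{\prod}}p_{i_j}^{r_{i_j}}$ and $g_2=\us{j=1}{\os{u}{\prod}}q_{i_j}^{s_{i_j}}$ and regard their exponents as vectors $\mbf r,\mbf s$ indexed by all primes, so that $P_1=[\mbf r]$ and $P_2=[\mbf s]$ in $\mbb{PF}^{\ify}_{\mbb Q_{\geq 0}}$; since $g_1,g_2>1$, every logarithm below is well defined and positive.

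The heart of the argument is the equivalence $(1)\Lra(2)$, which I would obtain by unfolding log-rationality through unique factorization. If $Log_{g_1}(g_2)=\frac mn$ with $m,n$ positive integers, then $g_2^{n}=g_1^{m}$; comparing the exponent of each prime gives $n\,\mbf s=m\,\mbf r$, so $\mbf r$ and $\mbf s$ are rational multiples of one another, which is exactly $P_1=P_2$. Conversely, if $P_1=P_2$ then by the positivity remark recorded after Definition~\ref{defn:ProjSpaceMult} there is $\frac mn\in\mbb Q^{+}$ with $\mbf s=\frac mn\mbf r$, hence $g_2^{n}=g_1^{m}$ and $Log_{g_1}(g_2)=\frac mn$ is rational. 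This is the quantitative refinement of Lemma~\ref{lemma:Irrational}.

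For the remaining links, $(2)\Lra(3)$ is immediate from Theorem~\ref{theorem:Bijection}: a point of $\mbb{PF}^{\ify}_{\mbb Q_{\geq 0}}$ has a unique $\gcd$-one non-negative integer representative, and this is precisely the generator $f$, so $P_1=P_2$ iff $f_1=f_2$. For $(3)\Ra(4)$ I would set $f_1=f_2=f$; since $g_1,g_2$ determine the same point as $f$, they are powers $g_1=f^{d_1},g_2=f^{d_2}$ with $d_1,d_2\in\mbb N$, so $\mbb T=\{g_1^{i}g_2^{j}\mid i,j\geq 0\}=\{f^{d_1i+d_2j}\}\sbq\{1,f,f^2,\ldots\}$, a singly generated maximal multiplicatively closed set. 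Finally, for $(4)\Ra(1)$, if $\mbb T\sbq\{1,h,h^2,\ldots\}$ then $g_1=h^{a},g_2=h^{b}$ for positive integers $a,b$, so $Log_{g_1}(g_2)=\frac ba$ is rational, closing the cycle.

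I expect the only delicate point to be the exponent-comparison step in $(1)\Lra(2)$: one must verify that proportionality of the (a priori infinite) exponent vectors over $\mbb Q$ matches equality of the projective points under $\sim_R$, and in particular that the scaling factor may be taken positive. This is exactly why the correct target is $\mbb{PF}^{\ify}_{\mbb Q_{\geq 0}}$ rather than $\mbb{PF}^{\ify}_{\mbb Q}$, and it is settled by the positivity remark following Definition~\ref{defn:ProjSpaceMult}.
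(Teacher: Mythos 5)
Your proposal is correct and follows essentially the same route as the paper: both establish the chain $(1)\Ra(2)\Ra(3)\Ra(4)$ via $g_2^{n}=g_1^{m}$ and unique factorization, and close the cycle with $(4)\Ra(1)$ by writing $g_1,g_2$ as powers of the single generator. Your version merely spells out more explicitly the exponent-vector comparison and the fact that an integral vector proportional to a $\gcd$-one vector is an integer multiple of it, which the paper leaves implicit.
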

\begin{proof}
Suppose $Log_{g_1}(g_2)=\frac mn$ is rational. Then we have $g_2^n=g_1^m$. So the distinct prime 
factors of $g_1,g_2$ agree and we also have that their exponents are projectively equivalent. 
Hence we get $P_1=P_2$. So this implies $f_1=f_2=f$ say. Then we get that 
$\mbb{T}\subs \{1,f,f^2,\ldots,\}$.

For the converse if $\mbb{T}\subs \{1,f,f^2,\ldots,\}$ for some $1 \neq f \in \mbb{N}$ then 
$g_1=f^n$, $g_2=f^m$ and we have $g_2^n=g_1^m$. Hence $Log_{g_1}(g_2)=\frac mn$ is rational. 
This completes the equivalence of the statements $(1),(2),(3),(4)$ and also proves 
Theorem~\ref{theorem:Bijection}.
\end{proof}
Now we have the following corollary.
\begin{cor}
\label{cor:Equivalence}
A multiplicatively closed set $\mbb{T}=\{g_1^ig_2^j\mid i,j\geq 0,g_1,g_2 \in \mbb{N}\bs\{1\}\}$ 
is not contained in a singly generated multiplicatively closed set if and only if 
$Log_{g_1}(g_2),Log_{g_2}(g_1)$ are both irrational if and only if $g_1,g_2$ represent two 
distinct points in the projective space
$\mbb{PF}^{\infty}_{\mbb{Q}_{\geq 0}}$.
\end{cor}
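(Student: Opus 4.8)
The plan is to read this corollary as the logical negation of Theorem~\ref{theorem:Equivalence}, so that almost all the work has already been done: once three elementary observations are in place, the three equivalences in the corollary fall out by negating statements $(1),(2),(4)$ of that theorem. Concretely, I would match the three conditions of the corollary to the negations of conditions $(1),(2),(4)$ of Theorem~\ref{theorem:Equivalence} applied to $g_1,g_2$, with $P_1,P_2$ the points they represent under the bijection of Theorem~\ref{theorem:Bijection}.

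First I would dispose of the logarithm condition. Since $g_1,g_2 \in \mbb{N}\bs\{1\}$ are both $>1$, the quantities $Log_{g_1}(g_2)$ and $Log_{g_2}(g_1)$ are positive reals and mutually reciprocal, $Log_{g_2}(g_1)=1/Log_{g_1}(g_2)$. As the reciprocal of a nonzero rational is rational and the reciprocal of an irrational is irrational, one of them is irrational if and only if both are. Hence the corollary's condition that $Log_{g_1}(g_2)$ and $Log_{g_2}(g_1)$ are \emph{both} irrational is equivalent to saying $Log_{g_1}(g_2)$ is not rational, that is, to the negation of statement $(1)$ of Theorem~\ref{theorem:Equivalence}.

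Next I would reconcile the corollary's containment condition, which speaks of containment in an arbitrary singly generated set, with statement $(4)$ of Theorem~\ref{theorem:Equivalence}, which speaks of containment in a singly generated \emph{maximal} set. The key remark is that every singly generated multiplicatively closed set $\{1,h,h^2,\ldots\}$ is contained in a unique maximal one: writing $h=\us{i}{\prod}p_i^{a_i}$ and $d=gcd(a_i)$, one has $h=f^d$ with $f=\us{i}{\prod}p_i^{a_i/d}$, so that $\{1,h,h^2,\ldots\}\sbq\{1,f,f^2,\ldots\}$, the latter being maximal by the $gcd=1$ criterion established in the proof of Theorem~\ref{theorem:Bijection}. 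Consequently $\mbb{T}$ is contained in some singly generated set if and only if it is contained in a singly generated maximal set, so the phrase ``not contained in a singly generated multiplicatively closed set'' is precisely the negation of statement $(4)$.

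Finally I would invoke Theorem~\ref{theorem:Equivalence}, which gives $(1)\Llra(2)\Llra(4)$, whence their negations are mutually equivalent as well. Since ``$g_1,g_2$ represent two distinct points of $\mbb{PF}^{\infty}_{\mbb{Q}_{\geq 0}}$'' is exactly the assertion $P_1\neq P_2$, the negation of statement $(2)$, the chain of negations reads: not contained in a singly generated set $\Llra$ not$(4)\Llra$ not$(2)\Llra$ distinct points, and both logarithms irrational $\Llra$ not$(1)\Llra$ not$(2)\Llra$ distinct points. This yields all three equivalences of the corollary. I expect the only genuinely nontrivial point to be the reconciliation of ``singly generated'' with ``singly generated maximal'' in the third paragraph; everything else is a mechanical negation of an already-proved theorem together with the reciprocal identity for the two logarithms.
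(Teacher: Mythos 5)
Your proposal is correct and matches the paper's intent: the paper states this corollary without a separate proof, treating it as the contrapositive/negation of Theorem~\ref{theorem:Equivalence}, which is exactly what you carry out. Your two supporting observations --- that $Log_{g_1}(g_2)$ and $Log_{g_2}(g_1)$ are reciprocals so one is irrational iff both are, and that containment in some singly generated set is equivalent to containment in a maximal one via dividing the exponent vector by its gcd --- are precisely the (unstated) points needed to pass from the theorem's statements $(1)$ and $(4)$ to the corollary's phrasing.
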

In the theorem that follows we give a criterion as to when a multiplicatively closed set is 
contained in a doubly generated multiplicatively closed set.
\begin{theorem}
\label{theorem:DoubleGen}
Let $\mbb{S}=\{g_1^{i_1}g_2^{i_2}\ldots g_r^{i_r}\mid i_1,i_2,\ldots,i_r \in \mbb{N}\cup \{0\}\}$
be a multiplicatively closed set generated by $r-$elements.
Suppose corresponding to these positive integers $g_i:1 \leq i \leq r$ the points 
$[g_i]: 1\leq i \leq r \in \mbb{PF}^{\infty}_{\mbb{Q}_{\geq 0}} \subs 
\mbb{PF}^{\infty}_{\mbb{Q}}$ lie on a projective line $L$ (i.e. a rank$=1,2$ 
condition on the matrix of powers of primes in the prime factorizations of $g_i$) obtained by
joining two points of $\mbb{PF}^{\infty}_{\mbb{Q}_{\geq 0}}$ whose corresponding integers are 
relatively prime. Then $\mbb{S}$ is contained in a doubly generated multiplicatively closed set.
\end{theorem}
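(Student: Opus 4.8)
The plan is to produce an explicit doubly generated multiplicatively closed set containing $\mbb{S}$, namely the set generated by the two relatively prime integers attached to the points defining $L$. First I would invoke Theorem~\ref{theorem:Bijection} to name the data: let $P_1,P_2$ be the two points whose corresponding integers $h_1,h_2$ are relatively prime and generate $L$, and write their prime factorizations $h_1=\us{p\in S_1}{\prod}p^{c_p}$ and $h_2=\us{q\in S_2}{\prod}q^{d_q}$ with exponent vectors $u_1,u_2 \in \mbb{Q}_{\geq 0}^{\infty}$. Because $h_1,h_2$ are the integer representatives furnished by the bijection, two facts are available at once: the supports $S_1,S_2$ are disjoint (this is exactly $gcd(h_1,h_2)=1$), and within each factor the exponents are coprime, i.e. $gcd\{c_p:p\in S_1\}=1$ and $gcd\{d_q:q\in S_2\}=1$ (this is the maximality built into the bijection). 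The whole theorem then reduces to the claim that each generator satisfies $g_i=h_1^{\ga_i}h_2^{\gb_i}$ for some $\ga_i,\gb_i \in \mbb{N}\cup\{0\}$; granting this, every element of $\mbb{S}$ is a product of powers of the $g_i$ and hence of $h_1,h_2$, so $\mbb{S}\sbq\{h_1^ah_2^b\mid a,b\geq 0\}$.

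Next I would translate the hypothesis that $[g_i]$ lies on $L$ into a vector equation. Lying on the projective line through $[u_1]$ and $[u_2]$ means the exponent vector $v_i$ of $g_i$ lies in the rational span of $u_1,u_2$ (this is the rank$\,\leq 2$ condition), so $v_i=\ga_i u_1+\gb_i u_2$ for some $\ga_i,\gb_i\in\mbb{Q}$. Here the disjointness of $S_1$ and $S_2$ does the decisive work: the equation decouples coordinatewise, giving $(v_i)_p=\ga_i c_p$ for $p\in S_1$, $(v_i)_q=\gb_i d_q$ for $q\in S_2$, and $(v_i)_r=0$ for every prime $r\nin S_1\cup S_2$. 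Since $g_i$ is a genuine positive integer its exponents $(v_i)_p$ are non-negative integers, which forces $\ga_i\geq 0$ and $\gb_i\geq 0$ and, incidentally, shows $g_i$ is supported only on $S_1\cup S_2$.

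The hard part is upgrading $\ga_i,\gb_i$ from non-negative rationals to non-negative integers, and this is precisely where the coprimality of the exponents is used. Writing $\ga_i=a/b$ in lowest terms with $b\geq 1$, the relations $a c_p/b=(v_i)_p\in\Z$ for every $p\in S_1$ give $b\mid c_p$ for all such $p$, hence $b\mid gcd\{c_p:p\in S_1\}=1$, so $b=1$ and $\ga_i\in\mbb{N}\cup\{0\}$; the identical argument with the $d_q$ handles $\gb_i$. Therefore $g_i=h_1^{\ga_i}h_2^{\gb_i}$ with non-negative integer exponents, the reduction in the first paragraph applies, and $\mbb{S}$ is contained in the doubly generated set $\{h_1^ah_2^b\mid a,b\geq 0\}$. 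I expect the only genuinely subtle point to be this integrality step: if the defining points were represented by non-maximal integers (exponent $gcd>1$) one could only conclude $g_i=h_1^{\ga_i}h_2^{\gb_i}$ with rational $\ga_i,\gb_i$, which need not land inside the monoid, so the relatively prime and maximality hypotheses are exactly what rule this out.
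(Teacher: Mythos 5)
Your proposal is correct and follows essentially the same route as the paper: identify the two relatively prime integer representatives of the points spanning $L$, use the disjointness of their prime supports to decouple the coordinates of the exponent vector of each $g_i$, and use the coprimality of the exponents within each representative (the maximality from Theorem~\ref{theorem:Bijection}) to force the coefficients to be non-negative integers, whence $\mbb{S}\sbq\{h_1^ah_2^b\mid a,b\geq 0\}$. The paper phrases the same divisibility argument multiplicatively (writing $p_1^{a_i}p_2^{b_i}=g_i^{c_i}$ and deducing $c_i\mid a_i$, $c_i\mid b_i$), while you are somewhat more explicit about why the rational coefficients must be non-negative; the content is identical.
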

\begin{proof}
If $\mbb{S}$ gives rise to a single point then there is nothing to prove. So
let $P_1,P_2 \in \mbb{PF}^{\infty}_{\mbb{Q}_{\geq 0}}$ be any two distinct points, which gives 
rise to the projective line $L$.
Let $p_1,p_2$ be the positive integers, which represent these points $P_1,P_2$ with 
$gcd(p_1,p_2)=1$. Then the hypothesis that the points $[g_i]$ lie on the projective line 
$P_1P_2$ implies that there exists integers $a_i,b_i,c_i \geq 0$  such that 
$p_1^{a_i}p_2^{b_i}=g_i^{c_i}$ for $1 \leq i\leq r$.
Consider the unique prime factorization of 
\equ{p_1=q_1^{s_1}q_2^{s_2}\ldots q_l^{s_l},p_2=q_1^{t_1}q_2^{t_2}\ldots q_l^{t_l},}
where we assume without loss of generality that $gcd(s_1,s_2,\ldots,s_l)=1$,
$gcd(t_1,t_2,\ldots,t_l)=1$. If in addition we have $gcd(p_1,p_2)=1$
then we have $s_jt_j=0: 1 \leq j \leq l$ but one of $s_j$ and $t_j$ is non-zero for each $j$. 
In all cases we conclude that $c_i\mid s_ja_i, c_i \mid t_jb_i$ for $1 \leq j \leq l$. So 
$c_i \mid a_i,c_i\mid b_i : 1 \leq i \leq r$ as $gcd(s_1,s_2,\ldots,s_l)=1,
gcd(t_1,t_2,\ldots,t_l)=1$.  Hence the set $\mbb{T}=\{p_1^ip_2^j\mid i,j \geq 0\} \sups \mbb{S}$
and this proves Theorem~\ref{theorem:DoubleGen}.
\end{proof}
\begin{example}
\label{example:LineNotMultClosed}
Let $g_1=45,g_2=20,g_3=30$. Then we have $g_1g_2=g_3^2$. So the doubly generated 
multiplicatively closed set generated by $g_1,g_2$ contains $g_3^2$ but not $g_3$.
However there is no doubly generated multiplicatively closed set containing all $g_1,g_2,g_3$ 
because there are no two distinct non-trivial common factors of
$g_1,g_2,g_3$ as $gcd(g_1,g_2,g_3)=5$, which is prime. Now the corresponding exponents satisfy
\equ{(0,2,1,0,\ldots)+(2,0,1,0,\ldots)=2.(1,1,1,0,\ldots).}
since $g_1g_2=g_3^2$ and the exponent vectors lie on a projective line 
$L \subs \mbb{PF}^{\infty}_{\mbb{Q}}$.
\end{example}
\begin{note}
Theorem~\ref{theorem:DoubleGen} can be generalized as follows. Let 
\equ{\mbb{S}=\{g_1^{i_1}g_2^{i_2}\ldots g_r^{i_r}\mid i_1,i_2,\ldots,i_r \in \mbb{N}\cup \{0\}\}}
be a multiplicatively closed set generated by $r-$ elements.
Fix a prime $p=2$. Suppose there exists two positive integers $p_1,p_2$ such that the monoid 
$\{aLog_{p}(p_1)+bLog_{p}(p_2)\mid a,b \in \Z_{\geq 0}\}$ contains the set 
$\{Log_{p}(g_i):1 \leq i \leq r\}$ then the set 
$\mbb{S} \subs \mbb{T}=\{p_1^ip_2^j\mid i,j\geq 0\}$.
\end{note}

\begin{note}
\label{note:ExampleLine}
In Example~\ref{example:LineNotMultClosed}, for all integer representatives $f \in \mbb{N}$ such 
that $[f] \in L$ we have $5 \mid f$. This is the only prime with this property for the line $L$,
which is not a doubly multiplicatively closed line. So does there exist a doubly 
multiplicatively closed line with such a prime? Definitely not when there are only two primes 
involved with the line $L$.

In Example~\ref{example:LineNotMultClosed}, we have the following properties holding true.
\begin{itemize}
\item For all integer representatives $f \in \mbb{N}$ such that $[f] \in L$ we have $5 \mid f$ 
and this is the only such prime. Neither of the primes
$2,3$ satisfy this property.
\item There exists numbers $g_1=45,g_2=20$ whose points lie on $L$ and two primes $2,3$ such 
that \equ{3 \mid 45, 3\nmid 20,2 \mid 20, 2 \nmid 45.}
\item The lattice $M$ corresponding to $L$ is a two dimensional lattice, which does not possess a 
basis $\{x,y\}$ such that $M \cap \Z^r_{\geq 0}$ satisfies
the monoid addition property. i.e.
\equ{\boxed{ax+by \in M\cap \Z^r_{\geq 0} \Lra a \geq 0,b \geq 0.}}
\end{itemize}
\end{note}

In the following theorem we classify doubly multiplicatively closed lines.
\begin{theorem}
\label{theorem:LineClassification}
A line $L$ joining two points $P_1,P_2 \in \mbb{PF}^{\infty}_{\mbb{Q}_{\geq 0}} \subs 
\mbb{PF}^{\infty}_{\mbb{Q}}$ is a doubly multiplicatively closed line
if and only if there exists two points $Q_1=[q_1],Q_2=[q_2]\in 
\mbb{PF}^{\infty}_{\mbb{Q}_{\geq 0}} \subs \mbb{PF}^{\infty}_{\mbb{Q}}$ with positive integers
\equ{q_1=p_1^{a_1}p_2^{a_2}\ldots p_r^{a_r},q_2=p_1^{b_1}p_2^{b_2}\ldots p_r^{b_r}.}
with the following properties.
\begin{enumerate}
\item Trivial Index Property(Alternative, refer~\ref{item:TIV}): The gcd of two by two minors of 
\equ{\mattwofour {a_1}{a_2}{\ldots}{a_r}{b_1}{b_2}{\ldots}{b_r}} is one.
\item Monoid Addition Property(Alternative, refer~\ref{item:MAP}): There exists two subscripts 
$i,j$ such that $a_ib_i=0=a_jb_j$ and either $a_ib_j\neq 0$ or $a_jb_i \neq 0$.
\end{enumerate}
In particular if there exists two points $Q_1,Q_2 \in L\cap 
\mbb{PF}^{\infty}_{\mbb{Q}_{\geq 0}}$ such that their corresponding integer representatives
are relatively prime then $L$ is a doubly multiplicatively closed line.
\end{theorem}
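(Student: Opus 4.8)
The plan is to recast the statement in the language of lattices and cones, where both named properties acquire a transparent geometric meaning. Let $W\subs \us{i\geq 1}{\bigoplus}\mbb{Q}$ be the two-dimensional rational subspace whose projectivization is the line $L$, let $M=W\cap \us{i\geq 1}{\bigoplus}\Z$ be its rank-two lattice of integer tuples, and let $\gs=W\cap\us{i\geq 1}{\bigoplus}\Z_{\geq 0}$ be the monoid of non-negative integer tuples lying on $L$. By Definition~\ref{defn:DoublyMultClosedLine} together with the bijection of Theorem~\ref{theorem:Bijection}, sending a tuple $\mbf v=(v_1,v_2,\ldots)\in\gs$ to $\us{i}{\prod}p_i^{v_i}$ identifies the multiplicatively closed set attached to $L$ with the additive monoid $\gs$; hence $L$ is a doubly multiplicatively closed line if and only if $\gs$ is generated as a monoid by two elements. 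I will take $Q_1=[q_1],Q_2=[q_2]$ to be two points spanning $L$, and write $\mbf a=(a_1,\ldots,a_r),\ \mbf b=(b_1,\ldots,b_r)$ for the (primitive, non-negative) exponent vectors of $q_1,q_2$.

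Next I would record two elementary translations. First, by Smith normal form, for linearly independent $\mbf a,\mbf b$ the gcd of the $2\times 2$ minors of $\mattwofour{a_1}{a_2}{\ldots}{a_r}{b_1}{b_2}{\ldots}{b_r}$ equals the index $[M:\Z\mbf a+\Z\mbf b]$, so the Trivial Index Property holds exactly when $\{\mbf a,\mbf b\}$ is a $\Z$-basis of $M$, i.e. when every integer tuple on $L$ is an integral combination of $\mbf a$ and $\mbf b$. Second, the Monoid Addition Property says precisely that $\mbf a$ has a coordinate $i$ with $a_i>0=b_i$ and $\mbf b$ a coordinate $j$ with $b_j>0=a_j$; given these, any tuple $\gl\mbf a+\gm\mbf b\in\gs$ (with $\gl,\gm\in\mbb{Q}$) has non-negative $i$-th and $j$-th coordinates $\gl a_i$ and $\gm b_j$, which forces $\gl,\gm\geq 0$. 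The ``if'' direction is then immediate: writing an arbitrary $\mbf w\in\gs$ as $\gl\mbf a+\gm\mbf b$, the Trivial Index Property gives $\gl,\gm\in\Z$ and the Monoid Addition Property gives $\gl,\gm\geq 0$, whence $\mbf w\in\Z_{\geq 0}\mbf a+\Z_{\geq 0}\mbf b$ and $\gs$ is generated by $q_1,q_2$.

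For the converse I would argue from the extreme rays. Since $P_1\neq P_2$ both lie in $\mbb{PF}^{\infty}_{\mbb{Q}_{\geq 0}}$, the cone $\gs$ is pointed and two-dimensional; its two primitive extreme-ray generators $\mbf u,\mbf v$ are indecomposable and therefore belong to every monoid generating set, so if $\gs$ is doubly generated they must be the generators and $\gs=\Z_{\geq 0}\mbf u+\Z_{\geq 0}\mbf v$. This forces $\{\mbf u,\mbf v\}$ to be a $\Z$-basis of $M$ (otherwise the half-open fundamental parallelogram of $\mbf u,\mbf v$ contains a nonzero lattice point of $\gs$ that is not a non-negative integral combination of $\mbf u,\mbf v$), giving the Trivial Index Property for $Q_1=[u],Q_2=[v]$. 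To obtain the Monoid Addition Property I would discard the coordinates on which $W$ vanishes identically and realize $\gs$ inside the orthant of the surviving coordinates as the intersection of the half-spaces $x_i\geq 0$. Each extreme ray is then the locus where some constraint $x_{i_0}=0$ is tight; the constraint tight on the $\mbf v$-ray cannot be tight on the $\mbf u$-ray as well (otherwise $x_{i_0}$ would vanish on all of $W$, contrary to $i_0$ being a surviving coordinate), so it is strictly positive on the $\mbf u$-ray, producing a coordinate with $u_{i_0}>0=v_{i_0}$; symmetry gives a coordinate private to $\mbf v$, which is exactly the Monoid Addition Property. I expect this last step --- extracting the private coordinates from the facet structure of $\gs$ --- to be the main obstacle, since it is where the combinatorics of the non-negative orthant genuinely enters and where care is needed to rule out degenerate coordinates.

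Finally, for the ``in particular'' clause: if $Q_1,Q_2\in L\cap\mbb{PF}^{\infty}_{\mbb{Q}_{\geq 0}}$ have relatively prime integer representatives $q_1,q_2$, then no prime divides both, so the supports of $\mbf a$ and $\mbf b$ are disjoint while both are non-empty (as $q_1,q_2>1$); this yields a private coordinate for each of $\mbf a,\mbf b$, hence the Monoid Addition Property. Moreover the only non-vanishing $2\times 2$ minors are the entries $\pm a_ib_j$ with $i\in\mrm{supp}(\mbf a),\ j\in\mrm{supp}(\mbf b)$, whose gcd equals $(\gcd_i a_i)(\gcd_j b_j)=1$ by primitivity, giving the Trivial Index Property; so $L$ is a doubly multiplicatively closed line by the equivalence just established.
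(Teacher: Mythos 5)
Your proposal is correct, and the forward implication (properties $\Rightarrow$ doubly multiplicatively closed) is essentially the paper's argument: both of you convert the Trivial Index Property into the statement $\Z\mbf a+\Z\mbf b=W\cap\Z^r$ via the gcd-of-minors/elementary-divisor computation, and both extract $\gl,\gm\geq 0$ from the two ``private'' coordinates supplied by the Monoid Addition Property. Where you genuinely diverge is the converse. The paper works directly with the given generating pair $\{s,t\}$ of exponent vectors and establishes the two properties by explicit lattice manipulation: it first produces (Claim~\ref{claim:BasisNonNegative}) a non-negative $\Z$-basis $\{s,w\}$ of $M=V\cap\Z^r$ using the sublattice theorem, an $SL_2(\Z)$ change of basis and unipotent corrections of signs, then deduces the trivial index property (Claim~\ref{claim:TrivialIndexProperty}), and finally gets the monoid addition property (Claim~\ref{claim:MonoidAdditionProperty}) by observing that if $s$ had no private zero coordinate then $ms-t$ would lie in $M\cap\Z^r_{\geq 0}$ for large $m$, contradicting $2$-generation. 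You instead argue intrinsically from the cone $\gs=W\cap\bigoplus\Z_{\geq 0}$: the primitive generators of its two extreme rays are indecomposable, hence must constitute any two-element generating set; the half-open parallelogram argument then gives the trivial index property, and the observation that no coordinate functional can be tight on both extreme rays (else it would vanish on all of $W$) gives the private coordinates. Your route is the standard toric/Hilbert-basis viewpoint and has the advantage of identifying the canonical generating pair $\{q_1,q_2\}$ a priori and avoiding the sign-adjustment bookkeeping of Claim~\ref{claim:BasisNonNegative}; the paper's route stays entirely within elementary lattice algebra and never invokes facts about extreme rays or indecomposable elements of affine monoids. Both correctly reduce the ``in particular'' clause to the disjoint-support computation $\gcd_{i,j}(a_ib_j)=(\gcd_ia_i)(\gcd_jb_j)=1$.
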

\begin{proof}
First we prove the last assertion. Suppose there exists such points $Q_1,Q_2$ on $L$ and let 
$q_1,q_2$ be the corresponding integers. Let \equ{q_1=p_1^{s_1}p_2^{s_2}\ldots p_l^{s_l},
q_2=p_{l+1}^{s_{l+1}}p_{l+2}^{s_{l+2}}\ldots p_n^{s_n}.} be their unique prime factorizations
with $s_i\in \mbb{N}:1 \leq i\leq n,gcd(q_1,q_2)=1$. Now we choose $q_1,q_2$ such that 
\equ{gcd(s_1,s_2,\ldots,s_l)=1=gcd(s_{l+1},s_{l+2},\ldots,s_n).}

If $g\in \mbb{N}$ such that $[g]\in L$ then there exist $a,b,c \in \mbb{N}$ such that 
$q_1^aq_2^b=g^c$. This implies
\equ{c \mid as_i:1 \leq i \leq l,c \mid bs_i: l+1\leq i\leq n \Ra c \mid a,c\mid b.}
So $g \in \mbb{T}=\{q_1^iq_2^j\mid i,j\in \mbb{N}\cup \{0\}\}$.
In this particular case we also have in the matrix
\equ{\mattwosix {s_1}{\ldots}{s_l}{0}{\ldots}{0}{0}{\ldots}{0}{s_{l+1}}{\ldots}{s_n}}
has the property that its gcd of two minors equal 
$gcd(s_is_j:1 \leq i \leq l,l+1 \leq j \leq n)=1$. This proves the last assertion.

Now we prove the first assertion. In particular the implication $(\La)$.
Assume $L$ has such points $Q_1=[q_1],Q_2=[q_2]$. 

Let $V=\mbb{Q}-span$ of the vectors $\{a=(a_1,\ldots,a_r),(b_1,\ldots,b_r)\}$.
Let $M=\Z-span$ of the vectors $\{a,b\}$.
Then we have the following properties.
\begin{enumerate}[label=(\Alph*)]
\item \label{item:TIV}
Trivial Index Property: \equ{V \cap \Z^r=M.} This follows because of the gcd of the 
$2\times 2$ minors is one. i.e. $M$ has a trivial index in $V \cap \Z^r$.
Using the theorem for sublattices we get that for the tower of sublattices
\equ{M \subs V \cap \Z^r \subs \Z^r}
that there exists a basis of $\Z^r$ given by $\{u_1,u_2,\ldots,u_r\}$ and positive integers 
$d_1,d_2$ such that $\{u_1,u_2\}$ is a basis of $V \cap \Z^r$
and $\{d_1u_1,d_2u_2\}$ is a basis of $M$, which has therefore index $d_1d_2$, which is also 
gcd of the $2 \times 2$ minors of $\{d_1u_1,d_2u_2\}$.
Since $\{d_1u_1,d_2u_2\}$ differ from the basis $\{a,b\}$ of $M$ by an $SL_2(\Z)$ matrix we 
have $d_1d_2=1$.
\item \label{item:MAP}
Monoid Addition Property: 
\equ{\ga a+ \gb b \in M\cap \Z^{r}_{\geq 0} \Lra \ga \geq 0,\gb \geq 0.} This follows because
there exists two subscripts $i,j$ such that $a_ib_i=0=a_jb_j$ and either $a_ib_j\neq 0$ or 
$a_jb_i \neq 0$ and the coordinate entries of both $a,b$ are non-negative.
\end{enumerate}
So we get that if $g \in \mbb{N}$ such that $[g]\in L$ then $g=q_1^iq_2^j$ for some 
$i,j\in \mbb{N}\cup \{0\}$. So the required multiplicatively closed
set representing the line is $\mbb{T}=\{q_1^iq_2^j\mid i,j \in \mbb{N}\cup \{0\}\}$.

Now we prove the implication $(\Ra)$. 

Suppose $L$ is a multiplicatively closed line with the 
multiplicatively closed set being
$\mbb{T}=\{q_1^iq_2^j\mid i,j \in \mbb{N}\cup \{0\}\}$. So if $g \in \mbb{N}$ such that 
$[g]\in L$ then $g \in \mbb{T}$. Let the prime exponent vectors
of $q_1,q_2$ be $s,t$ with $s=(s_1,s_2,\ldots,s_r),t=(t_1,t_2,\ldots,t_r),
gcd(s_1,s_2,\ldots,s_r)=1=gcd(t_1,t_2,\ldots,t_r)$.
This proof is a bit long. We prove both the Trivial Index Property and the Monoid 
Addition Property for $\{s,t\}$.
\begin{claim}
\label{claim:BasisNonNegative}
Let $V$ be a two dimensional $\mbb{Q}-$vector space spanned by $s=(s_1,s_2,\ldots,s_r),
t=(t_1,t_2,\ldots,t_r)$. Let $M=V \cap \Z^r$. Then there exists $w\in M$ with all its coordinate entries non-negative such that $\{s,w\}$ is a basis for $M$. 
\end{claim}
\begin{proof}[Proof of Claim]
We observe that $V$ is the corresponding affine space defined by the projective line $L$ and 
also that $M=V \cap \Z^r$ is a two dimensional
lattice. Now by a theorem on sublattices of $\Z^r$ it follows that there exists a basis of 
$\Z^r$ say \equ{\{u=(u_1,u_2,\ldots,u_r),v=(v_1,v_2,\ldots,v_r),w^1,w^2,\ldots,w^{r-2}\}}
and positive integers $d_1,d_2$ such that $\{d_1u,d_2v\}$ is a basis of $M$ with 
$d_1 \mid d_2$. Since $M$ contains a gcd one vector on every line we have 
$d_1=1=d_2$.
If $\ga u+\gb v=s$ then $gcd(\ga,\gb)=1$ because $(\ga)+(\gb)=(\ga u_1,\ga u_2,\ldots,
\ga u_r)+(\gb v_1,\gb v_2,\ldots,\gb v_r)\sups (\ga u_1+\gb v_1,
\ga u_2+\gb v_2,\ldots,\ga u_r+\gb v_r)=(s_1,s_2,\ldots,s_r)=\Z$. 

Hence there exists 
$\mattwo {\ga}{\gb}{\gga}{\gd}\in SL_2(\Z)$ such that
\equ{\mattwo {\mattwo {\ga}{\gb}{\gga}{\gd}}{0_{2 \times (r-2)}}{0_{(r-2) \times 2}}{I_{(r-2)
\times (r-2)}}\matcolfive {u}{v}{w^1}{\vdots}{w^{r-2}}=
\matcolfive {s}{w}{w^1}{\vdots}{w^{r-2}},} where $w=\gga u +\gd v$. Apriori $w$ need not have 
non-negative entries.
Using a unipotent lower triangular matrix over $\Z$ we need to consider only those entries 
of $w$ whose corresponding entries in $s$ are zero.
Now the vector $t$, which has non-negative entries lies in the span $M$ of $s,w$. 
i.e $t=\gep s+\gm w$ with $\gep \in \mbb{Z},\gm \in \mbb{Z}^{*}=\Z\bs\{0\}$.
Now if $s_i=0,w_i \neq 0$ then $sign(w_i)= sign(\gm)$.
If $sign(\gm)$ is negative then we consider $-w$ instead of $w$. Then we get that the $w_i$ has 
non-negative sign whenever $s_i$ is zero. Now again using unipotent lower triangular matrix
over $\Z$ we make the sign of the remaining entries of $w$ non-negative. Hence we arrive at a 
basis $\{s,w\}$ of $M$ such that both have non-negative integer entries. 
We have obtained $\{s,w\}$ from $\{u,v\}$ by an $\ti{S}L_2(\Z)$ transformation with determinant 
$\pm 1$.
This proves Claim~\ref{claim:BasisNonNegative}
\end{proof}
\begin{claim}[Trivial index property]
\label{claim:TrivialIndexProperty}
Let $\{s,w\}$ be the basis of $M$ obtained from Claim~\ref{claim:BasisNonNegative}.
Let \equ{q_1=p_1^{s_1}p_2^{s_2}\ldots p_r^{s_r},e=p_1^{w_1}q_2^{w_2}\ldots p_r^{w_r}.}
Since $M \cap \Z^r_{\geq 0}$ corresponds to a doubly multiplicatively closed set 
$\mbb{T}=\{q_1^iq_2^j\mid i,j \in \mbb{N}\cup \{0\}\}$
we have
\begin{itemize}
\item \equ{\{q_1^ie^j\mid i,j \in \Z\}=\{q_1^iq_2^j\mid i,j\in \Z\}.}
\item The $\Z-span$ of $\{s,t\}$ is the same as $\Z-span$ of $\{s,w\}$.
\item If for some $\ga,\gb \in \mbb{Q}, \ga s+\gb t \in \Z^r$ then $\ga,\gb \in \Z$.
\end{itemize}
\end{claim}
\begin{proof}[Proof of Claim]
Since $q_1,e$ corresponds to points in $M \cap \Z^r_{\geq 0}$ we have $e=q_1^{m_1}q_2^{m_2},
m_1,m_2\in \mbb{N}\cup \{0\}$.
So we have
$\{q_1^ie^j\mid i,j \in \Z\}\subs \{q_1^iq_2^j\mid i,j\in \Z\}.$ The other way containment is 
immediate since $\Z-$span of $\{s,w\}$ contains $\Z-$span of $\{s,t\}$.
Now the rest of the claim for exponents follows as $\{s,w\}$ is a $\Z-$basis for $M=V\cap \Z^r$ 
and $\mbb{Q}-$basis for $V$. This proves the trivial index property for $\{s,t\}$.
\end{proof}
\begin{claim}[Monoid addition property]
\label{claim:MonoidAdditionProperty}
The basis $\{s,t\}$ has monoid addition property.
\end{claim}
\begin{proof}[Proof of Claim]
Now suppose if all the coordinate entries of $s$ is positive. Then for some large $m\in \mbb{N}$
we have $ms-t$ has non-negative entries, which is a contradiction.
Hence there exist a subscript $i$ such that $s_i=0,t_i\neq 0$. Similarly there exist a 
subscript $j$ such that $t_j=0,s_i\neq 0$.
This proves the monoid addition property that
\equ{\ga s+ \gb t \in M\cap \Z^{r}_{\geq 0} \Lra \ga \geq 0,\gb \geq 0.}
\end{proof}
This completes the proof of Theorem~\ref{theorem:LineClassification}.
\end{proof}
\begin{example}
\label{example:LineMultClosed}
Let $g_1=10$,$g_2=15$. Then the line joining the points $[g_1],[g_2]$ is a multiplicatively 
closed line using Theorem~\ref{theorem:LineClassification},
where as the line in Example~\ref{example:LineNotMultClosed} joining 
$[\ti{g}_1=20],[\ti{g}_2=45]$ is not multiplicatively closed.
Now we could also use Theorem~\ref{theorem:LineClassification} to prove this fact in another way.
\end{example}
\begin{theorem}
\label{theorem:MultiplyGeneratedDoublyClosedLine}
Let $S\subs \mbb{N}$ be a finitely generated multiplicatively closed set whose corresponding
points lie on a double multiplicatively closed line $L$ containing points $Q_1=[q_1],Q_2=[q_2]$
satisfying trivial index property and monoid addition property then 
we have explicit expressions for the end points of certain arbitrarily large gap intervals
in the set $\mbb{S}$ using the generators $q_1,q_2$. 
\end{theorem}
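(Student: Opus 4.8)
The plan is to reduce the statement to the already-proved main gap Theorem~\ref{theorem:GapsDoublyGenerated} by first extracting, from the line $L$, the explicit doubly generated set that contains $\mbb{S}$, and then running the stabilization construction on its pair of generators. First I would invoke Theorem~\ref{theorem:LineClassification}: since $L$ is a doubly multiplicatively closed line containing $Q_1=[q_1],Q_2=[q_2]$ with the trivial index and monoid addition properties, the integer representatives of the points on $L$ are exactly the doubly generated set $\mbb{T}=\{q_1^iq_2^j\mid i,j\geq 0\}$. Because every generator $g$ of $\mbb{S}$ is assumed to satisfy $[g]\in L$, each such $g$ lies in $\mbb{T}$, and hence $\mbb{S}\sbq \mbb{T}$.

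Next I would show that $(q_1,q_2)$ is an irrational pair, so that Theorem~\ref{theorem:GapsDoublyGenerated} is directly applicable. The monoid addition property supplies subscripts $i,j$ with $a_ib_i=0=a_jb_j$ and, say, $a_ib_j\neq 0$; reading off signs this forces $a_i\neq 0=b_i$ and $a_j=0\neq b_j$, so the exponent vectors of $q_1$ and $q_2$ are linearly independent. Thus $[q_1]\neq[q_2]$, and by Corollary~\ref{cor:Equivalence} both $Log_{q_1}(q_2)$ and $Log_{q_2}(q_1)$ are irrational. After ordering $q_1<q_2$, Theorem~\ref{theorem:GapsDoublyGenerated} applies verbatim to $(q_1,q_2)$: with $\ga=\frac{1}{Log_{q_1}(q_2)}$, stabilization sequence $\{s_i\}$ and $t_i=\lfloor s_i\ga\rfloor$, the explicit intervals $(q_2^{t_{i+1}-1},\ldots,q_1^{s_i}q_2^{t_{i+1}-t_i-1})$ contain no element of $\mbb{T}$ apart from the two end-points, and their lengths grow faster than a geometric progression of ratio $q_2$.

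Finally I would transfer these intervals from $\mbb{T}$ to $\mbb{S}$. Since $\mbb{S}\sbq \mbb{T}$, each interval above is automatically devoid of elements of $\mbb{S}$ in its interior, while its two end-points are explicit products of powers of $q_1$ and $q_2$. As the interior lengths tend to infinity and $\mbb{S}$ is unbounded, for any bound one locates consecutive elements $a_n<a_{n+1}$ of $\mbb{S}$ bracketing such an interval, whence $\us{n\lra\infty}{limsup}(a_{n+1}-a_n)=\infty$, exactly in the spirit of Corollary~\ref{cor:GapsDoublyGenerated}; the factorized end-points $q_2^{t_{i+1}-1}$ and $q_1^{s_i}q_2^{t_{i+1}-t_i-1}$ are the promised explicit expressions.

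The step I would flag as the main obstacle is precisely this last transfer and its correct reading: the explicit end-points belong to the ambient set $\mbb{T}$ and need not lie in $\mbb{S}$, so what is genuinely explicit is an empty interval for $\mbb{S}$ together with the known factorizations of its end-points, rather than the exact pair of consecutive $\mbb{S}$-elements straddling it. One must therefore articulate the conclusion as the existence of explicitly described arbitrarily large gap intervals of $\mbb{S}$ (i.e. intervals meeting $\mbb{S}$ in at most their end-points), and then deduce the unboundedness of the consecutive differences as a formal consequence of $\mbb{S}\sbq\mbb{T}$ together with the divergence of the interior lengths.
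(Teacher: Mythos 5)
Your proposal is correct and follows essentially the same route as the paper, whose proof consists precisely of the observation that $\mbb{S}\subs\{q_1^iq_2^j\mid i,j\in\mbb{N}\cup\{0\}\}$ followed by an appeal to Theorem~\ref{theorem:GapsDoublyGenerated}. You merely supply details the paper leaves implicit (that the monoid addition property forces $[q_1]\neq[q_2]$, hence an irrational pair, and that the gap intervals transfer to the subset with end-points possibly lying only in the ambient set $\mbb{T}$), and these details are accurate.
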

\begin{proof}
This theorem follows because the set $\mbb{S}\subs \{q_1^iq_2^j\mid i,j\in \mbb{N}\cup\{0\}\}$
and then we use main result~\ref{theorem:GapsDoublyGenerated}.
\end{proof}

\section{\bf{Appendix}}
\label{sec:Appendix}
In this appendix section we prove some interesting lemmas about gaps, also present some 
motivating examples and give another constructive proof and discuss advantages
and disadvantages with respect to the above given constructive proof.

We begin with a lemma.
\begin{lemma}
\begin{enumerate}
\item Let $S \subs \mbb{N}$ be an infinite set. If \equ{\us{n \lra
\infty}{liminf}\frac{\#(S \cap [1,\ldots, n])}{n}=0} then there are
arbitrarily large gaps in $S$.
\item Let $S_i \subs \mbb{N}:1 \leq i\leq k$ be $k-$infinite subsets. If
for each $1 \leq i \leq k$ \equ{\us{n \lra \infty}{lim}\frac{\#(S_i
\cap [1,\ldots, n])}{n}=0} then there are arbitrarily large gaps in
$S=\us{i=1}{\os{k}{\bigcup}}S_i$.
\end{enumerate}
\end{lemma}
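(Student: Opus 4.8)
The plan is to establish part (1) by contraposition and then deduce part (2) from it by subadditivity of the counting function. For part (1) I would suppose that $S=\{a_1<a_2<\ldots\}$ does \emph{not} have arbitrarily large gaps, so that there is a constant $C$ with $a_{i+1}-a_i\leq C$ for all $i$. Writing $N(n)=\#(S\cap[1,n])$, the largest element of $S$ not exceeding $n$ is $a_{N(n)}$, and the telescoping bound $a_{N(n)+1}=a_1+\sum_{i=1}^{N(n)}(a_{i+1}-a_i)\leq a_1+C\,N(n)$ holds. Since $a_{N(n)+1}>n$ by the definition of $N(n)$, this gives $N(n)>(n-a_1)/C$, hence $N(n)/n>1/C-a_1/(Cn)$. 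Letting $n\to\infty$ yields $\us{n \lra \infty}{liminf}\,N(n)/n\geq 1/C>0$, which contradicts the hypothesis that this liminf equals $0$. Therefore the gaps of $S$ must be unbounded, i.e.\ arbitrarily large.

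For part (2) I would first note that $S=\bigcup_{i=1}^k S_i$ is infinite, since each $S_i$ is. The counting function is subadditive, $\#(S\cap[1,n])\leq\sum_{i=1}^k\#(S_i\cap[1,n])$, so dividing by $n$ and using that each summand tends to $0$ while the sum has finitely many terms gives $\#(S\cap[1,n])/n\to 0$; in particular its liminf is $0$. Part (1) applied to $S$ then produces arbitrarily large gaps.

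The main point to watch is the gap-to-density translation in part (1): the whole argument hinges on the elementary observation that bounded gaps force a strictly positive lower density $1/C$, so a vanishing lower density is incompatible with bounded gaps. Everything else---the subadditivity estimate and passing a finite sum of vanishing limits---is routine. I would also highlight that part (1) requires only the \emph{liminf} to vanish, which is exactly what part (2) supplies from the stronger full-limit hypothesis imposed on each $S_i$.
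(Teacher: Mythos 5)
Your argument is correct and is essentially the paper's own proof: part (1) by showing bounded gaps force a positive lower density (you merely make the telescoping estimate explicit where the paper only asserts it), and part (2) by subadditivity of the counting function followed by an application of part (1). No differences of substance.
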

\begin{proof}
To prove $(1)$ we observe that if the gaps were bounded then 
\equ{\us{n \lra \infty}{liminf}\frac{\#(S\cap [1,\ldots, n])}{n}>0.}
To prove $(2)$ we have 
\equ{0 \leq \us{n \lra \infty}{lim}\frac{\#(S \cap[1,\ldots, n])}{n} \leq 
\us{n \lra\infty}{lim}\us{i=1}{\os{k}{\sum}}\frac{\#(S_i \cap [1,\ldots,n])}{n}=0.}
Hence using $(1)$ the gaps in $S$ is unbounded.
\end{proof}
\begin{example}
The following sets have arbitrarily large gaps.
\begin{itemize}
\item A multiplicatively closed set generated by finitely many positive integers $>1$.
\item The set of all integers, which have exactly $k-$prime factors.
\item The set of all integers, which have atmost $k-$prime factors.
\end{itemize}
\end{example}

\begin{theorem}
Let $\mbb{S}_1,\mbb{S}_2$ be two infinite subsets of $\mbb{N}$. Let 
$\mbb{S}_3=\mbb{S}_1\cup \mbb{S}_2,\mbb{S}_4=\mbb{S}_1\mbb{S}_2=
\{s_1s_2\mid s_i\in \mbb{S}_i,i=1,2\}$. Let
$\mbb{S}_i=\{1<a_{i1}<a_{i2}<\ldots\}$ for $i=1,2,3,4$. Then
\begin{enumerate}
\item $\us{j\lra \infty}{limsup} (a_{i(j+1)}-a_{ij})=\infty$ for $i=1,2 \not\Ra 
\us{j\lra \infty}{limsup} (a_{i(j+1)}-a_{ij})=\infty$ for $i=3,4$.
\item $\us{j\lra \infty}{lim} (a_{1(j+1)}-a_{1j})=\infty, \us{j\lra \infty}{limsup} 
(a_{2(j+1)}-a_{2j})=\infty$
then \linebreak $\us{j\lra \infty}{limsup} (a_{3(j+1)}-a_{3j})=\infty$ and does not imply
$\us{j\lra \infty}{limsup} (a_{4(j+1)}-a_{4j})=\infty$.
\item $\us{j\lra \infty}{lim} (a_{i(j+1)}-a_{ij})=\infty$ for 
$i=1,2 \Ra \us{j\lra \infty}{limsup}(a_{4(j+1)}-a_{4j})=\infty$.
\end{enumerate}
\end{theorem}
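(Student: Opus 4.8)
The plan is to treat the three assertions with one common tool and then isolate the genuinely hard case. I would first record the elementary fact that underlies every \emph{positive} (inheritance) statement: if $\mbb{S}=\{1<a_1<a_2<\ldots\}$ satisfies $\us{j\lra\infty}{lim}(a_{j+1}-a_j)=\infty$, then $\#(\mbb{S}\cap[1,n])=o(n)$. Indeed, given $M$ pick $J$ with $a_{j+1}-a_j>M$ for $j\ge J$; then $a_j\ge a_J+M(j-J)$, so $\#(\mbb{S}\cap[1,n])\le J+n/M$ and hence $\us{n\lra\infty}{limsup}\,\#(\mbb{S}\cap[1,n])/n\le 1/M$ for all $M$. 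Together with the preceding Lemma (part $(1)$: vanishing lower density forces arbitrarily large gaps; part $(2)$: a finite union of density-zero sets has arbitrarily large gaps) this reduces each inheritance claim to a density estimate, while each \emph{non-implication} will be settled by an explicit construction.

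For the non-implications in $(1)$ and $(2)$ I would build counterexamples from blocks. For $(1)$, partition $\{2,3,4,\ldots\}$ into dyadic blocks $B_k=[2^k,2^{k+1})\cap\mbb{Z}$ $(k\ge 1)$ and set $\mbb{S}_1=\us{k\ \mrm{even}}{\bigcup}B_k$, $\mbb{S}_2=\us{k\ \mrm{odd}}{\bigcup}B_k$. Within a block consecutive integers are adjacent, so each $\mbb{S}_i$ has $\liminf$ gap $1$, but the gap straddling a skipped block has length $\ge 2^{k}\lra\infty$, so each $\mbb{S}_i$ has $\limsup$ gap $=\infty$; yet $\mbb{S}_1\cup\mbb{S}_2=\{2,3,4,\ldots\}$ has all gaps $1$. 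Since the conclusion of $(1)$ already fails for $i=3$, this single example refutes the implication (a parallel multiplicative block construction, with blocks long enough that the product of two blocks tiles a dyadic range, disposes of $i=4$). For the product non-implication in $(2)$ I would keep $\mbb{S}_1$ sparse with $\us{}{lim}$ gap $=\infty$ and take $\mbb{S}_2$ of positive density with its large gaps placed at a lacunary sequence of scales, arranging the dilates $s\mbb{S}_2$ $(s\in\mbb{S}_1)$ so that they jointly cover a fixed syndetic target, leaving $\mbb{S}_4=\mbb{S}_1\mbb{S}_2$ with bounded gaps while $\mbb{S}_2$ still carries arbitrarily large gaps. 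I expect this to be the fiddliest counterexample, since the covering interacts with divisibility of the targets by elements of $\mbb{S}_1$.

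The positive half of $(2)$ admits a clean direct proof that I would give in place of a density argument. Fix $G$. Because $\mbb{S}_1$ has all gaps tending to infinity, there is $c_0$ beyond which every gap of $\mbb{S}_1$ exceeds $2G$. Because $\mbb{S}_2$ has $\limsup$ gap $=\infty$, choose consecutive $c<d$ in $\mbb{S}_2$ with $c\ge c_0$ and $d-c\ge 2G$. On $(c,d)$ there are no elements of $\mbb{S}_2$, so the elements of $\mbb{S}_3=\mbb{S}_1\cup\mbb{S}_2$ inside $(c,d)$ are exactly those of $\mbb{S}_1$, and any two consecutive interior ones are separated by an $\mbb{S}_1$-gap $>2G$; a short case check (no interior point, one interior point, or at least two interior points) produces an $\mbb{S}_3$-gap $\ge G$ inside $(c,d)$. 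As $c$ may be taken arbitrarily large, $\limsup$ gap of $\mbb{S}_3=\infty$.

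The substantive claim is $(3)$: if both $\mbb{S}_1,\mbb{S}_2$ have all gaps tending to infinity, then $\mbb{S}_4=\mbb{S}_1\mbb{S}_2$ has arbitrarily large gaps. My plan is to reduce this to a density statement and invoke part $(1)$ of the preceding Lemma: it suffices to show $\us{n\lra\infty}{liminf}\,\#(\mbb{S}_4\cap[1,n])/n=0$. Writing $A_i(x)=\#(\mbb{S}_i\cap[1,x])$, one has $\#(\mbb{S}_4\cap[1,n])\le\us{s\in\mbb{S}_1,\ s\le n}{\sum}A_2(n/s)$, and the whole argument comes down to estimating this convolution. This is exactly where I expect the main obstacle: each factor is density zero by the fact above, but the product of two density-zero sets need not be density zero, so the naive split of the sum only yields order $n\log n$ rather than $o(n)$. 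The resolution must exploit that $\us{}{lim}$ gap $=\infty$ is quantitatively stronger than density zero — in particular it gives $\us{s\in\mbb{S}_1,\ s\le y}{\sum}1/s=o(\log y)$ by partial summation — and must choose the scale judiciously (bounding the count in a dyadic window $(N,2N]$ through the minimal gap of $\mbb{S}_2$ beyond $\sqrt N$) so that the density is driven to $0$ along a subsequence of $n$ even when the full density does not vanish. Carrying this estimate through rigorously, i.e. ruling out that $\mbb{S}_4$ is syndetic, is the technical heart of the theorem; once it is in hand, $(3)$ follows at once from the Lemma.
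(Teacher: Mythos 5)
Your treatment of the non-implication in (1) for $i=3$ (alternating blocks of increasing length) and of the positive half of (2) (locate a long $\mbb{S}_2$-gap beyond the point where every $\mbb{S}_1$-gap exceeds $2G$, then do the case check inside it) is correct and essentially the same as the paper's, which uses blocks of lengths $1,2,3,\ldots$ and the same union argument with constant $N/2$. The preliminary fact that $\us{j\lra \infty}{lim}(a_{j+1}-a_j)=\infty$ forces density zero is also fine. But three of the five assertions are not actually established in what you wrote.

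First, for (1) with $i=4$ you only gesture at ``a parallel multiplicative block construction''; the paper gives a concrete example: partition the primes into two infinite classes $\mbb{PP}_1,\mbb{PP}_2$ and let $\mbb{S}_i$ be the multiplicative semigroup generated by $\mbb{PP}_i$, so that $\mbb{S}_1\mbb{S}_2$ contains every multiple of $6$ (bounded gaps) while each $\mbb{S}_i$ has arbitrarily large gaps by the Chinese Remainder Theorem. Second, for the product non-implication in (2) you describe a covering scheme you yourself call fiddly and never carry out; the paper's example is clean and you should simply use it: $\mbb{S}_1=\{n^2\mid n\in\mbb{N}\}$ and $\mbb{S}_2$ the squarefree numbers give $\mbb{S}_1\mbb{S}_2=\mbb{N}$, with the squares having gaps tending to infinity and the squarefree numbers having arbitrarily large gaps by CRT. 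Third, and most seriously, part (3) is not proved: you reduce it to showing $\us{n\lra \infty}{liminf}\,\#(\mbb{S}_4\cap[1,n])/n=0$, observe that the natural convolution bound only yields $O(n\log n)$, and explicitly defer ``the technical heart.'' That is an announced plan, not a proof; worse, the reduction itself is unjustified, since nothing in the hypotheses obviously forces the lower density of $\mbb{S}_4$ to vanish (as you note, products of density-zero sets need not have density zero), so the density route may be a dead end. The paper avoids density entirely for (3): choose $N,M$ so that all gaps of $\mbb{S}_1$ past $a_{1N}$ and of $\mbb{S}_2$ past $a_{2M}$ are at least $K$; the successor of $a_{1N}a_{2M}$ in $\mbb{S}_4$ is some $a_{1\ti{N}}a_{2\ti{M}}$ with $\ti{N}>N$ or $\ti{M}>M$, and expanding the difference coordinatewise gives a lower bound of the form $(a_{1\ti{N}}-a_{1N})\,a_{2M}\geq K$ (with a symmetric case analysis). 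You should either adopt a direct argument of this kind or supply the missing estimate; as it stands, (3) and both product counterexamples are gaps.
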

\begin{proof}
Let us prove $(1)$ by giving a counter example.
\begin{itemize}
\item Consider the set of natural numbers $\mbb{N}$. Decompose $\mbb{N}$ into two sets 
$\mbb{S}_1,\mbb{S}_2$
as follows. Keep the first element of $\mbb{N}$ in $\mbb{S}_1$. The next two elements in 
$\mbb{S}_2$. The next three
elements in $\mbb{S}_1$ and so on i.e.
\equa{\mbb{S}_1 &=\us{i\geq 0}{\bigcup}\{(2i+1)(i+1)-2i,\ldots,(2i+1)(i+1)\}\\
\mbb{S}_2 &=\us{i\geq 1}{\bigcup}\{i(2i+1)-2i+1,\ldots,i(2i+1)\}}
Then $\mbb{S}_1 \cup \mbb{S}_2=\mbb{N}$.
\item Partion the set of primes $\mbb{P}$ into two infinite subsets of primes 
$\mbb{PP}_1,\mbb{PP}_2$.
Let $\mbb{S}_i$ be the multiplicatively closed set generated by $\mbb{PP}_i$ for $i=1,2$. Then
$\mbb{S}_1\mbb{S}_2=\mbb{N}$ and  $\us{j\lra \infty}{limsup} (a_{i(j+1)}-a_{ij})=\infty$ for 
$i=1,2$
by an application of chinese remainder theorem.
\end{itemize}
Let us prove $(2)$. Given any $N>0$ there exists $M$ such that $a_{1(k+1)}-a_{1k}>N$ for all
$k>M$ and there exists infintely many $l>M$ such that $a_{2(l+1)}-a_{2l}>N$. Also choose large 
enough $l=l_0>M$ such that if $a_{1k_0}>a_{2l_0}$ then $k_0>M$. If $a_{2l_0}<a_{2(l_0+1)}$ are
consecutive in $\mbb{S}_1 \cup \mbb{S}_2$ then we have produced a gap more than $N$. If 
$a_{2l_0}<a_{1k_0}$ are consecutive then
\begin{itemize}
\item We have either $a_{2l_0}<a_{1k_0}<a_{1(k_0+1)}$ as consecutive integers in 
$\mbb{S}_1 \cup\mbb{S}_2$.
\item Or $a_{2l_0}<a_{1k_0}<a_{2(l_0+1)}$ as consecutive integers in $\mbb{S}_1 \cup \mbb{S}_2$.
\end{itemize}
In the first case we are done again. In the second case we have either 
$a_{1k_0}-a_{2l_0} > \frac N2,a_{2(l_0+1)}-a_{1k_0}> \frac N2$. Hence we have
produced a gap more than $\frac N2$. Moreover these gaps can be produced arbitrary number of 
times by choosing $M$ larger and larger for any positive integer $N$. So we
have $\us{j\lra \infty}{limsup} (a_{3(j+1)}-a_{3j})=\infty$.

Now for second part of $(2)$ we give a counter example.
Let $\mbb{S}_1=\{n^2 \mid n \in \mbb{N}\}$. Let $\mbb{S}_2=\{n\in \mbb{N}\mid n 
\text{ is square free}\}$.
Then $\mbb{S}_1\mbb{S}_2=\mbb{N}$. We have $\us{j\lra \infty}{lim} (a_{1(j+1)}-a_{1j})=\infty$. 
Also by an application of chinese remainder theorem we have 
$\us{j\lra \infty}{limsup} (a_{2(j+1)}-a_{2j})=\infty$.

Let us prove $(3)$. Fix a large integer $K$. Let
$\mbb{T}_1=\{1<a_{11}<a_{12}<\ldots<a_{1N}\},\mbb{T}_2=\{1<a_{21}<a_{22}<\ldots<a_{2M}\}$.
Suppose $a_{1(t+1)}-a_{1t}\geq K$ for all $t \geq N-1$ and
$a_{2(t+1)}-a_{2t} \geq K$ for all $t \geq M-1$. Let $a_{1N}a_{2M},a_{1\ti{N}}a_{2\ti{M}}$ be 
two successive numbers in the set $\mbb{S}_1\mbb{S}_2$. Then we have either $\ti{N}>N$ or
$\ti{M}>M$. We note that for $\ti{N}>N$ we have
\equ{a_{1\ti{N}}a_{2\ti{M}}-a_{1N}a_{2M} \geq
(a_{1\ti{N}}-a_{1N})a_{2\ti{M}} \geq K\text{ if }\ti{M} \geq  M.} For
\equ{a_{1\ti{N}}a_{2\ti{M}}-a_{1N}b_{2M} \geq (a_{1\ti{N}}-a_{1N})a_{2M}
\geq K\text{ if }M > \ti{M}.}
The argument is similar if $\ti{M}>M$. This holds for any large $K$. So
$\us{j\lra \infty}{limsup} (a_{4(j+1)}-a_{4j})=\infty$.

Hence we have completed the proof of this theorem.
\end{proof}
\begin{theorem}
Let $\mbb{S}_i:1 \leq i \leq n$ be finitely many infinite subsets of $\mbb{N}$. Let 
$\mbb{S}_{n+1}=\us{i=1}{\os{n}{\bigcup}}\mbb{S}_i,
\mbb{S}_{n+2}=\us{i=1}{\os{n}{\prod}}\mbb{S}_i=\{s_1s_2\ldots s_n\mid s_i
\in \mbb{S}_i,1 \leq i \leq n\}$. Let $\mbb{S}_i=\{1<a_{i1}<a_{i2}<\ldots\}:1 \leq i \leq n+2$.
If $\us{j\lra \infty}{lim} (a_{i(j+1)}-a_{ij})=\infty$ for $i=1,\ldots,n$ then
$\us{j\lra \infty}{limsup} (a_{i(j+1)}-a_{ij})=\infty$ for $i=n+1,n+2$.
\end{theorem}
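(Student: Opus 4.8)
This is the $n$-factor analogue of parts (2) and (3) of the preceding theorem, strengthened so that every factor $\mbb{S}_i$ satisfies $\us{j\lra\infty}{lim}(a_{i(j+1)}-a_{ij})=\infty$. The plan is to treat the union $\mbb{S}_{n+1}$ and the product $\mbb{S}_{n+2}$ separately, since the stronger hypothesis makes the union statement essentially immediate while the product statement carries all the difficulty.

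For $\mbb{S}_{n+1}=\us{i=1}{\os{n}{\bigcup}}\mbb{S}_i$ I would first note that $\us{j\lra\infty}{lim}(a_{i(j+1)}-a_{ij})=\infty$ forces each $\mbb{S}_i$ to have density zero: choosing $m_0$ so that every gap past the $m_0$-th term exceeds $K$ gives $\#(\mbb{S}_i\cap[1,x])\leq m_0+\frac{x}{K}$, hence $\us{x\lra\infty}{lim}\frac{\#(\mbb{S}_i\cap[1,x])}{x}=0$ since $K$ is arbitrary. With this, part (2) of the first Lemma of this appendix applies verbatim to the finite union and yields $\us{j\lra\infty}{limsup}(a_{(n+1)(j+1)}-a_{(n+1)j})=\infty$, settling the case $i=n+1$ with no new idea.

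For $\mbb{S}_{n+2}=\us{i=1}{\os{n}{\prod}}\mbb{S}_i$ I would generalise the threshold construction behind part (3). Fix the target $K$ and an auxiliary constant $K'$ to be taken much larger than $K$. For each $i$ choose $M_i$ with $a_{i(t+1)}-a_{it}\geq K'$ for all $t\geq M_i$, and set $P=\us{i=1}{\os{n}{\prod}}a_{iM_i}\in\mbb{S}_{n+2}$. The goal is to show $(P,P+K)\cap\mbb{S}_{n+2}=\es$, which forces a gap of length at least $K$ at $P$; as $K$ is arbitrary this gives $\us{j\lra\infty}{limsup}(a_{(n+2)(j+1)}-a_{(n+2)j})=\infty$. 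Given any $v=\us{i=1}{\os{n}{\prod}}a_{im_i}\in\mbb{S}_{n+2}$ with $v>P$ whose representation is coordinatewise above threshold (all $m_i\geq M_i$), pick $i_0$ with $m_{i_0}>M_{i_0}$ and factor out the other coordinates:
\equ{v-P\geq\Big(a_{i_0m_{i_0}}-a_{i_0M_{i_0}}\Big)\us{i\neq i_0}{\prod}a_{iM_i}\geq K'>K,}
so no such element can land in $(P,P+K)$.

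The hard part, and the main obstacle, is a representation of some $v\in(P,P+K)$ in which one or more coordinates drop below their thresholds: here the increase in the coordinates exceeding $M_i$ can nearly cancel, multiplicatively, the decrease in the coordinates below $M_i$, so that $v$ sits just above $P$ and the crude factoring estimate is lost. (This is precisely the step the two-factor write-up glosses over.) To close it I would argue quantitatively: for each coordinate whose index falls below its threshold, such a near-cancelling $v$ must place an element of the corresponding $\mbb{S}_i$ inside a window whose width is $O(K)$ after dividing by the bounded-below product of the other coordinates, while the sparseness guarantee forbids two elements of $\mbb{S}_i$ within any window shorter than $K'$; taking $K'\gg K$ and then running a pigeonhole over the infinitely many admissible base points $P$ obtained as all $M_i\lra\infty$ should produce infinitely many $P$ with $(P,P+K)$ empty. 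I would emphasise that the full hypothesis $\lim=\infty$ on all $n$ factors is indispensable here: part (2) of the preceding theorem already exhibits $\{n^2\}$ times the squarefree numbers, where one factor is only $\limsup=\infty$, whose product is all of $\mbb{N}$; consequently no induction that merely preserves $\limsup$ on the partial products $\mbb{S}_1\cdots\mbb{S}_{n-1}$ can succeed, and the $n$ factors must be handled simultaneously.
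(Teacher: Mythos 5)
Your treatment of the union $\mbb{S}_{n+1}$ is complete and correct: gaps tending to infinity force each $\mbb{S}_i$ to have density zero, and part (2) of the first lemma of this appendix then applies to the finite union. Note, however, that the paper offers no proof of this theorem at all (it is left to the reader), so the only thing to measure your argument against is the two-factor antecedent, part (3) of the preceding theorem.

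For the product $\mbb{S}_{n+2}$ your proposal is not a proof. The case you yourself flag --- an element $v=\prod_{i}a_{im_i}\in(P,P+K)$ admitting a representation with some $m_i<M_i$ --- is left open, and the ``window of width $O(K)$ plus pigeonhole over base points'' sketch does not specify how the pigeonhole is run or why the near-cancelling representations cannot spoil every admissible $P$; it cannot be checked. This is a genuine gap, not a routine verification: the very same case already breaks the paper's two-factor argument, since the displayed inequality $a_{1\tilde{N}}a_{2\tilde{M}}-a_{1N}a_{2M}\geq(a_{1\tilde{N}}-a_{1N})a_{2M}$ in the subcase $\tilde{M}<M$ is equivalent to $a_{2\tilde{M}}\geq a_{2M}$, contradicting $\tilde{M}<M$. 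Nor can the difficulty be bypassed by a density count: the bound $\#(\mbb{S}_1\mbb{S}_2\cap[1,x])\leq\us{a\in\mbb{S}_1,\,a\leq x}{\sum}\#(\mbb{S}_2\cap[1,x/a])$ carries a factor $\us{a\leq x}{\sum}1/a$ that may be unbounded even when the gaps of $\mbb{S}_1$ diverge, so the product of two sets satisfying the hypothesis need not have density zero and part (1) of the first appendix lemma is unavailable. Your closing observation --- that a naive induction on partial products fails because only $\limsup$, not $\lim$, is preserved, and $\limsup$ alone is insufficient by the squares-times-squarefree example --- is correct and worth keeping, but as it stands neither your write-up nor the paper contains a proof of the product half of the statement.
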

\begin{proof}
The proof of this theorem is left to the interested reader.
\end{proof}
\begin{cor}
\begin{enumerate}
\item  The set of natural numbers $\mbb{N}$ cannot be written as a finite
product of sets $\mbb{S}_1\mbb{S}_2\ldots\mbb{S}_n,$ where the gaps in $\mbb{S}_i$ diverges to 
$\infty$ for $1 \leq i\leq n$.
\item The set of natural numbers $\mbb{N}$ cannot be written as a finite
union of sets $\mbb{S}_1\cup \mbb{S}_2\cup \ldots\cup \mbb{S}_n,$ where the gaps in $\mbb{S}_i$ 
diverges to $\infty$ for $1 \leq i\leq n$.
\item The multiplicatively closed subset $\mbb{S}$ of $\mbb{N}$ generated by finitely many
positive integers $>1$ has arbitrarily large gaps.
\end{enumerate}
\end{cor}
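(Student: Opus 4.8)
The plan is to deduce all three parts directly from the immediately preceding theorem, which asserts that if finitely many infinite sets $\mbb{S}_i$ each have consecutive gaps whose genuine limit is $\infty$, then both their finite union $\mbb{S}_{n+1}=\us{i=1}{\os{n}{\bigcup}}\mbb{S}_i$ and their finite product $\mbb{S}_{n+2}=\us{i=1}{\os{n}{\prod}}\mbb{S}_i$ have $\limsup$ of consecutive gaps equal to $\infty$. The governing observation is that $\mbb{N}$, ordered as $1<2<3<\ldots$, has every consecutive gap equal to $1$, hence $\limsup$ of its gaps is $1$, not $\infty$; this is exactly what will contradict the theorem's conclusion in parts (1) and (2).

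For part (1) I would argue by contradiction. Suppose $\mbb{N}=\mbb{S}_1\mbb{S}_2\ldots\mbb{S}_n$ with $\us{j\lra\infty}{lim}(a_{i(j+1)}-a_{ij})=\infty$ for each $1\leq i\leq n$. Taking these as the sets in the theorem, their product plays the role of $\mbb{S}_{n+2}$, so the hypotheses are met and we obtain $\us{j\lra\infty}{limsup}(a_{(n+2)(j+1)}-a_{(n+2)j})=\infty$. But $\mbb{S}_{n+2}=\mbb{N}$ has all gaps equal to $1$, whence its $\limsup$ of gaps is $1$, a contradiction. Part (2) is verbatim the same argument, with $\mbb{S}_{n+1}=\us{i=1}{\os{n}{\bigcup}}\mbb{S}_i$ in place of the product.

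For part (3), let $\mbb{S}$ be generated by integers $d_1,\ldots,d_k>1$, so that $\mbb{S}=\{d_1^{i_1}\ldots d_k^{i_k}\mid i_j\geq 0\}$. I would express $\mbb{S}=\mbb{T}_1\mbb{T}_2\ldots\mbb{T}_k$ as a finite product, where $\mbb{T}_j=\{d_j^m\mid m\geq 0\}=\{1,d_j,d_j^2,\ldots\}$ is the singly generated multiplicatively closed set arising from $d_j$. Each $\mbb{T}_j$ is infinite (since $d_j\geq 2$) and its consecutive gaps are $d_j^{m+1}-d_j^m=d_j^m(d_j-1)\lra\infty$, so the genuine limit of its gaps is $\infty$. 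Applying the product case of the preceding theorem then yields that the $\limsup$ of consecutive gaps of $\mbb{S}$ equals $\infty$, i.e. $\mbb{S}$ has arbitrarily large gaps.

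The arguments are short precisely because the substantive work is carried out in the preceding theorem; the only point demanding care is that the theorem requires the genuine limit of gaps to diverge rather than merely the $\limsup$. Thus in part (3) I must verify the stronger divergence $d_j^m(d_j-1)\lra\infty$ for every singly generated factor, which holds exactly because each generator exceeds $1$. I expect no serious obstacle beyond this bookkeeping, namely correctly matching $\mbb{S}$, $\us{i=1}{\os{n}{\bigcup}}\mbb{S}_i$, and $\us{i=1}{\os{n}{\prod}}\mbb{S}_i$ with the roles of $\mbb{S}_{n+1}$ and $\mbb{S}_{n+2}$ in the theorem.
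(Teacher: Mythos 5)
Your proposal is correct and is essentially the paper's (unwritten) argument: the corollary appears immediately after the theorem on unions and products of sets with divergent gaps, with no separate proof, and the intended derivation is exactly your contradiction via the gap-$1$ structure of $\mbb{N}$ for parts (1) and (2) and the factorization $\mbb{S}=\mbb{T}_1\cdots\mbb{T}_k$ into singly generated sets for part (3). Your observation that the theorem demands genuine divergence of the gaps (which holds for each $\mbb{T}_j$ since $d_j^m(d_j-1)\lra\infty$) is the right point of care.
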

\begin{theorem}[Another constructive proof]
\label{theorem:AnotherConstructiveProof}
The multiplicatively closed subset of $\mbb{N}$ generated by finitely many positive integers 
$\mbb{S}$ has arbitrarily large gaps.
\end{theorem}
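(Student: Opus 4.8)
The plan is to turn the non-constructive density argument of the preceding Corollary into an explicit localization of a gap, at the cost of not controlling the factorization of the right end-point. Write $\mbb{S}=\langle d_1,d_2,\ldots,d_n\rangle$ with each $d_i>1$, and let $S(x)=\#(\mbb{S}\cap[1,x])$ denote its counting function.

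First I would record the elementary counting bound already used in the introduction: every element of $\mbb{S}\cap[1,x]$ has the form $\us{i=1}{\os{n}{\prod}}d_i^{e_i}$ with $0\leq e_i\leq \log_{d_i}x$, and $\log_{d_i}x\leq \log_2 x$ since $d_i\geq 2$, so that
\equ{S(x)\leq \us{i=1}{\os{n}{\prod}}\big(\lfloor\log_{d_i}x\rfloor+1\big)\leq (\lfloor\log_2 x\rfloor+1)^n.}
This is polylogarithmic in $x$, which is the engine behind the density-zero statement.

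Next, given any target gap size $G$, I would choose a threshold $x=x(G)$ so large that $\frac{x-1}{(\lfloor\log_2 x\rfloor+1)^n}>G$; such $x$ exists because the left-hand side tends to $\infty$. The $S(x)$ elements of $\mbb{S}$ lying in $[1,x]$ cut this interval into at most $S(x)$ consecutive subintervals whose lengths sum to at least $x-1$, so by the pigeonhole principle at least one gap has length exceeding $\frac{x-1}{S(x)}>G$. The construction is then explicit in the following sense: enumerating the finitely many exponent vectors $(e_1,\ldots,e_n)$ with $\us{i}{\prod}d_i^{e_i}\leq x$ produces the finite sorted list $\mbb{S}\cap[1,x]=\{1=a_0<a_1<\cdots<a_{S(x)-1}\}$, from which one selects the consecutive pair $(a_k,a_{k+1})$ realizing the maximal gap. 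Equivalently, fixing the marker $x$, one takes the left end-point $a=\max(\mbb{S}\cap[1,x])$ and the right end-point to be its successor $a'=\min\{s\in\mbb{S}\mid s>a\}$, and the estimate guarantees $a'-a\lra\infty$ along a suitable sequence of thresholds.

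The main obstacle --- and the feature that distinguishes this argument from the main Theorem~\ref{theorem:GapsDoublyGenerated} --- is that, although the left end-point $a$ is produced together with a known prime factorization (it arrives with its exponent vector from the enumeration), the right end-point $a'$ is only characterized as the next element of $\mbb{S}$. The pigeonhole step certifies that a large gap occurs but furnishes no closed formula for $a'$, so its factorization is not known in advance; unlike the two-generator construction driven by the stabilization sequence of $\frac{1}{Log_{p_1}(p_2)}$, there is here no arithmetic device producing both end-points simultaneously by a formula. I would close by letting $G\lra\infty$ to obtain $\us{j\lra\infty}{limsup}(a_{j+1}-a_j)=\infty$, which is the assertion of the theorem and recovers constructively the corresponding item of the preceding Corollary.
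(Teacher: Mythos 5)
Your argument is correct in substance, but it takes a genuinely different route from the paper's. You make the counting bound $S(x)\leq(\lfloor\log_2 x\rfloor+1)^n$ quantitative and extract a gap of length $>G$ by pigeonhole inside $[1,x]$; the paper instead writes down the left end-point of a large gap in closed form and never counts anything. It sets $a_i=\lceil Log_{n_i}(K)\rceil$ and takes the point $n_1^{t_1}\cdots n_k^{t_k}$ with $t_i\geq a_i$; if $l=n_1^{s_1}\cdots n_k^{s_k}$ is the next element of $\mbb{S}$, then some exponent satisfies $s_{i_0}>t_{i_0}$, and factoring out $n_{i_0}^{t_{i_0}}$ gives $l-n_1^{t_1}\cdots n_k^{t_k}=n_{i_0}^{t_{i_0}}\big(n_{i_0}^{s_{i_0}-t_{i_0}}a-b\big)\geq n_{i_0}^{t_{i_0}}\geq K$, since the parenthesized factor is a positive integer. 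What this divisibility trick buys is an explicit formula for the left end-point of a gap of size at least $K$ (exactly the point stressed in the Note following the theorem), with no enumeration of $\mbb{S}\cap[1,x]$ required; what your pigeonhole buys is little beyond the non-effective density lemma already proved at the start of the appendix, since locating the gap still requires listing and sorting all of $\mbb{S}\cap[1,x]$. One small inaccuracy: your ``equivalently'' reformulation --- taking $a=\max(\mbb{S}\cap[1,x])$ and its successor --- is not equivalent to the pigeonhole statement; the pigeonhole only guarantees that the \emph{longest} of the pieces exceeds $G$, and that piece need not be the terminal one, so the large gap must be searched for rather than read off at the right edge of $[1,x]$. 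Both proofs do share the feature you highlight at the end: the right end-point is characterized only as the successor in $\mbb{S}$, so its factorization is not known in advance.
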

\begin{proof}
We give here another constructive proof in this Theorem. Let $K$ be
an arbitrary positive integer. Let $n_1,n_2,\ldots,n_k$ be the
generators of the multiplicatively closed set.

Define $\lceil Log_{n_i}(K) \rceil=a_i$. Then we have for all
\equ{t_i \geq a_i,t_i\in
\mbb{N},n_i^{t_i+1}-n_i^{t_i}=n_i^{t_i}(n_i-1) \geq n_i^{t_i} \geq
K.} The gap between $n_1^{t_1}n_2^{t_2}\ldots n_{k}^{t_k}$ and the
next number $l$ in the set $\mbb{S}_1\mbb{S}_2\ldots\mbb{S}_k$ is at
least $K$. Let $l=n_1^{s_1}n_2^{s_2}\ldots n_k^{s_k}$ be the next
number. Then there is at least one $i=i_0$ such that $s_i>t_i$.

Let $a=\frac{n_1^{s_1}n_2^{s_2}\ldots
n_k^{s_k}}{n_i^{s_i}},b=\frac{n_1^{t_1}n_2^{t_2}\ldots n_{k}^{t_k}}{n_i^{t_i}}$.
So we get that $n_1^{s_1}n_2^{s_2}\ldots
n_k^{s_k}-$\linebreak $n_1^{t_1}n_2^{t_2}\ldots n_{k}^{t_k}=
n_i^{s_i}a-n_i^{t_i}b=n_i^{t_i}(n_i^{s_i-t_i}a-b) \geq
n_i^{t_i} \geq K$.
\end{proof}
\begin{note}
The difference between this constructive proof and the other
constructive proof is that we do not exactly know the right end
point $l$ of this Gap-Interval as we do not know its prime
factorization exactly. However we were able to locate a point
$n_1^{a_1}n_2^{a_2}\ldots n_{k}^{a_k}$ and a gap interval of size at
least $K$ with this integer as the left end point for every positive
integer $K>0$.
\end{note}


\begin{thebibliography}{99}
\bibitem{HD} D. Hensley,
\it {The distribution of round numbers}, Proc. Lond. Math. Soc. {\bf s3-54} (1987), 
no. 3, 412--444

\bibitem{HATG} A. Hildebrand and G. Tanenbaum, \it {On the number of prime factors of an integer},
Duke Math. J. {\bf 56} (1988), no. 3, 471--501.

\bibitem{LE} E. Landau, \it{Handbuch der Lehre von der Verteilung der Primzahlen}, Vol. {\bf I}, 
Leipzig (1909)

\bibitem{LGSIII} L. G. Sathe,
\it{On a problem of Hardy on the distribution of integers having a given number 
of prime factors. I, II}, J. Indian Math. Soc. {\bf 17} (1953), nos. 1-4, 63--82, 83--141.

\bibitem{LGSIIIIV} L. G. Sathe,
\it{On a problem of Hardy on the distribution of integers having a given number 
of prime factors. III, IV}, J. Indian Math. Soc. {\bf 18} (1954), nos. 1-4, 27--42, 43--81.

\bibitem{SA} A. Selberg, \it{Note on a paper by L. G. Sathe}, J. Indian Math. Soc. {\bf 18}
 (1954), nos. 1-4, 83--87.

\bibitem{HWEYL} H. Weyl, \it{{\"U}ber die Gleichverteilung von Zahlen mod. Eins}, Math. Ann. 
{\bf 77} (1916), no. 3, 313--352.
\end{thebibliography}
\end{document}